\theoremstyle{plain}
\newtheorem{theorem}{Theorem}
\newtheorem{lemma}[theorem]{Lemma}
\theoremstyle{definition}
\theoremstyle{remark}
\newtheorem{remark}{Remark}
\newcommand{\Qmark}{{\usefont{T1}{qcr}{m}{n}?}}
\newcommand{\ind}{\mathbb{1}} 
\newcommand{\NN}{\mathbb{N}}
\newcommand{\ZZ}{\mathbb{Z}}
\newcommand{\prob}[1]{\mathbb{P}\left( #1 \right)}
\newcommand{\esp}[1]{\mathbb{E}\left[ #1 \right]}
\newcommand{\env}[1]{\tilde{H}_{#1}}
\newcommand{\hard}[1]{H_{#1}}
\definecolor{codegreen}{rgb}{0,0.6,0}
\definecolor{codegray}{rgb}{0.5,0.5,0.5}
\definecolor{codepurple}{rgb}{0.58,0,0.82}
\definecolor{backcolour}{rgb}{0.94,0.94,0.94}
\lstdefinestyle{mystyle}{
    backgroundcolor=\color{backcolour},   
    commentstyle=\color{codegreen},
    keywordstyle=\color{magenta},
    numberstyle=\tiny\color{codegray},
    stringstyle=\color{codepurple},
    basicstyle=\ttfamily\footnotesize,
    breakatwhitespace=false,         
    breaklines=true,                 
    captionpos=b,                    
    keepspaces=true,                 
    numbers=left,                    
    numbersep=5pt,                  
    showspaces=false,                
    showstringspaces=false,
    showtabs=false,                  
    tabsize=2
}
\title{Ergodicity of the hard-core PCA \\ with a random walk method}
\author[1]{J\'er\^ome Casse}
\author[2]{Ir\`ene Marcovici}
\author[2]{Maxence Poutrel}
\affil[1]{\small Université Paris-Saclay, CNRS,
Laboratoire de mathématiques d’Orsay, 91405 Orsay, France\\ \texttt{jerome.casse@universite-paris-saclay.fr}}
\affil[2]{\small Univ Rouen Normandie, CNRS, Normandie Univ, LMRS UMR 6085, F-76000 Rouen, France\newline \texttt{\{irene.marcovici,maxence.poutrel1\}@univ-rouen.fr}}
\begin{document}

\maketitle

\begin{abstract}
  The hard-core probabilistic cellular automaton has attracted a renewed interest in the last few years, thanks to its connection with the study of a combinatorial game on percolation configurations. We provide an alternative proof for the ergodicity of this PCA for a neighbourhood of size $2$ and $3$, using the notion of decorrelated islands introduced by Casse in 2023, together with some new ideas. This shortens the previous proofs and provides a more intuitive and unified approach.
  \medskip
  \\
  \noindent \textbf{Keywords}: probabilistic cellular automata, ergodicity, percolation, hard-core model.
\end{abstract}

\section{Percolation games and PCA}

\paragraph{Percolation games.}
Let us present the following percolation game on~$\ZZ^2$, as in~\cite{HMM19}. Consider two non-negative reals $\epsilon_0,\epsilon_1 \in [0,1]$ such that $0 \leq \epsilon_0 + \epsilon_1 \leq 1$. For each site of $\ZZ^2$ we assign independently one of the 3 following states:
\begin{itemize}[label=$\bullet$]
	\item trap with probability $\epsilon_1$;
	\item target with probability $\epsilon_0$;
	\item open with probability $r=1-\epsilon_0-\epsilon_1$.
\end{itemize}
This defines the random board on which two players compete. 
Fix an integer $n \geq 2$. The rule of the game is the following. At time $0$, a token is placed at the starting position $(0,0)$, and then, the two players move it alternatively, from its current position $(i,j)$ on the board to a site in the set $\text{Out}(i,j)=\{(i+k,j+1) \, : \, k \in \{0,1,\dots,n-1\}\},$ see~Figure~\ref{fig:moves}. If the current player moves the token to a trap, that player loses the game immediately; if it moves it to a target, that player wins the game immediately; otherwise (i.e.\ if the destination site is open), the game continues with the other player's turn. 

\begin{figure}
  \begin{center}
    
    \begin{tikzpicture}[scale=0.8]
      \draw[step=1cm,color=black,ultra thin] (-.4,-.4) grid (4.4,3.4);
      \node[fill=black,inner sep=2pt,shape=circle,label={[xshift=.5cm, yshift=-.7cm]\footnotesize$(i,j)$}] at (1,1){};
      \node[fill=black,inner sep=2pt,shape=circle,label={[xshift=-.5cm]\footnotesize$(i,j+1)$}] at (1,2){};
      \node[fill=black,inner sep=2pt,shape=circle,label={[xshift=.5cm]\footnotesize$(i+1,j+1)$}] at (2,2){};
      \draw[ultra thick,color=black] (1,1) -- (1,2) node[
      currarrow,
      pos=0.5, 
      xscale=1,
      sloped,
      scale=1] {};
      \draw[ultra thick,color=black] (2,2) -- (1,1) node[
      currarrow,
      pos=0.5, 
      xscale=1,
      sloped,
      scale=1] {};
    \end{tikzpicture}\hspace{10mm}
    \begin{tikzpicture}[scale=0.8]
      \draw[step=1cm,color=black,ultra thin] (-.4,-.4) grid (4.4,3.4);
      \node[fill=black,inner sep=2pt,shape=circle,label={[xshift=.5cm, yshift=-.7cm]\footnotesize$(i,j)$}] at (1,1){};
      \node[fill=black,inner sep=2pt,shape=circle,label={[xshift=-.5cm]\footnotesize$(i,j+1)$}] at (1,2){};
      \node[fill=black,inner sep=2pt,shape=circle,label={[xshift=.5cm]\footnotesize$(i+1,j+1)$}] at (2,2){};
      \node[fill=black,inner sep=2pt,shape=circle,label={[xshift=.5cm,yshift=-.8cm]\footnotesize$(i+2,j+1)$}] at (3,2){};
      \draw[ultra thick,color=black] (1,1) -- (1,2) node[
      currarrow,
      pos=0.5, 
      xscale=1,
      sloped,
      scale=1] {};
      \draw[ultra thick,color=black] (2,2) -- (1,1) node[
      currarrow,
      pos=0.5, 
      xscale=1,
      sloped,
      scale=1] {};
      \draw[ultra thick,color=black] (3,2) -- (1,1) node[
      currarrow,
      pos=0.5, 
      xscale=1,
      sloped,
      scale=1] {};
    \end{tikzpicture}
  \end{center}
  \caption{The possible moves in the percolation game, for $n=2$ on the left, and $n=3$ on the right} \label{fig:moves}
\end{figure}
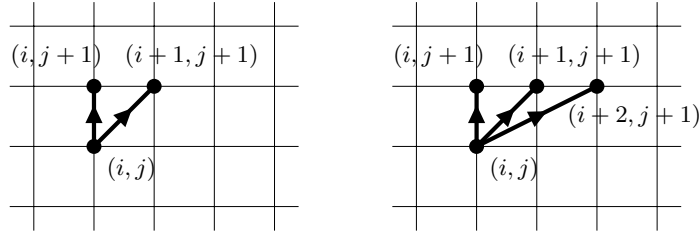

We can then ask the following question: with the best strategy for each player, what is the probability that the game never ends, so that there is a draw? The answer is $0$ for the cases $n=2$~\cite{HMM19} and $n=3$~\cite{BKPR22} when $0< \epsilon_0 + \epsilon_1 \leq 1$. In this article, we give alternative proofs of these two results, using ideas developed in~\cite{Casse23}. For $n \geq 4$, the question remains open.

\paragraph{Associated PCA.} To study the percolation game, we can associate to each site the outcome (win, lose or draw) of the current player (the one who moves out the token from this site) when both players play perfectly. If we know the outcomes of the sites of the set $\text{Out}(i,j)$, it allows to find out the outcome of site $(i,j)$. This leads to the introduction of a probabilistic cellular automaton (PCA) having a neighborhood of size $n$, with three states: $0$~for the win, $1$~for the lose and~\Qmark~for the draw. Note that a trap can be interpreted as a win site, since if a player moves the token to a trap, the next player (the one who should have moved the token out from the trap site) wins; while a target can be interpreted as a lose site.  

This PCA, denoted $\env{n}$ later in this article, defines a Markov chain $(X_i(t) : i \in \ZZ)_{t \geq 0}$ on $\{0,1,\text{\Qmark}\}^\ZZ$, whose transition probabilities satisfy:
\begin{displaymath}
  \prob{X_i(t+1) = a~|~\forall k \in \{0,\dots,n-1\},\, X_{i+k}(t) = 0} =
  \begin{cases}
    \epsilon_0 & \text{if } a=0;\\
    1-\epsilon_0 & \text{if } a=1;\\
    0 & \text{if } a = \text{\Qmark}.
  \end{cases}
\end{displaymath}

\begin{displaymath}
  \prob{X_i(t+1)=a~|~\exists k \in \{0,\dots,n-1\},\, X_{i+k}(t)=1} =
  \begin{cases}
    1-\epsilon_1 &  \text{if } a=0;\\
    \epsilon_1&  \text{if } a=1;\\
    0&  \text{if } a=\text{\Qmark}.
  \end{cases}
\end{displaymath}

\begin{displaymath}
  \prob{X_i(t+1)=a~|~(X_i(t),\ldots,X_{i+n-1}(t))\in\{0,\text{\Qmark}\}^n\setminus\{(0,\ldots,0)\}} =
  \begin{cases}
    \epsilon_0 &  \text{if } a=0;\\
    \epsilon_1&  \text{if } a=1;\\
    r &  \text{if } a=\text{\Qmark}.
  \end{cases}
\end{displaymath}

These transitions are illustrated on Figure~\ref{fig:trans2} for $n=2$.

\begin{figure}
  \begin{center}
    \begin{tikzpicture}[scale=0.8]
      \foreach \i in {0,1}{
        \node [fill=none,inner sep=4pt,shape=circle] (voisin\i) at (\i*1.5,0) {0};    
      }
      \node[fill=none,inner sep=4pt,shape=circle] (cell0) at (0,-2) [label=right: w.p.\  $\epsilon_0$] {0}; 
      \node [fill=none,inner sep=4pt,shape=circle] at (0,-2.7) [label=right: w.p.\  $1-\epsilon_0$] {1};
      \node [fill=none,inner sep=4pt,shape=circle] at (0,-3.6) {};
      \draw [-] (voisin0.south) -- (cell0.north);
      \draw [-] (voisin1.south) -- (cell0.north);
    \end{tikzpicture}\hspace{1.5cm}
    \begin{tikzpicture}[scale=0.8]
      \node [fill=none,inner sep=4pt,shape=circle] (voisin0) at (0,.75) {1};    
      \node [fill=none,inner sep=4pt,shape=circle] (voisin1) at (1.5,.75) {\textbullet};
      \node [fill=none,inner sep=4pt,shape=circle] (voisin0) at (0,0) {\textbullet};    
      \node [fill=none,inner sep=4pt,shape=circle] (voisin1) at (1.5,0) {1};
      \node[fill=none,inner sep=4pt,shape=circle] (cell0) at (0,-2) [label=right: w.p.\ $1-\epsilon_1$] {0}; 
      \node [fill=none,inner sep=4pt,shape=circle] at (0,-2.7) [label=right: w.p.\ $\epsilon_1$] {1};
      \node [fill=none,inner sep=4pt,shape=circle] at (0,-3.6) {};
      \draw [-] (voisin0.south) -- (cell0.north);
      \draw [-] (voisin1.south) -- (cell0.north);
    \end{tikzpicture}\hspace{1.5cm}
    \begin{tikzpicture}[scale=0.8]
      \node [fill=none,inner sep=4pt,shape=circle] at (0,1.5) {\text{\Qmark}};    
      \node [fill=none,inner sep=4pt,shape=circle] at (1.5,1.5) {\text{\Qmark}};
      \node [fill=none,inner sep=4pt,shape=circle] at (0,.75) {\text{\Qmark}};    
      \node [fill=none,inner sep=4pt,shape=circle] at (1.5,.75) {0};
      \node [fill=none,inner sep=4pt,shape=circle] (voisin0) at (0,0) {0};    
      \node [fill=none,inner sep=4pt,shape=circle] (voisin1) at (1.5,0) {\text{\Qmark}};
      \node[fill=none,inner sep=4pt,shape=circle] (cell0) at (0,-2) [label=right: w.p.\ $\epsilon_0$] {0}; 
      \node [fill=none,inner sep=4pt,shape=circle] at (0,-3.4) [label=right: w.p.\ ${r = 1-\epsilon_0-\epsilon_1}$] {\text{\Qmark}};
      \node [fill=none,inner sep=4pt,shape=circle] at (0,-2.7) [label=right: w.p.\ $\epsilon_1$] {1};
      \draw [-] (voisin0.south) -- (cell0.north);
      \draw [-] (voisin1.south) -- (cell0.north);
    \end{tikzpicture}
  \end{center}
  \caption{Transition probability of the PCA $\env{2}$.} \label{fig:trans2}
\end{figure}
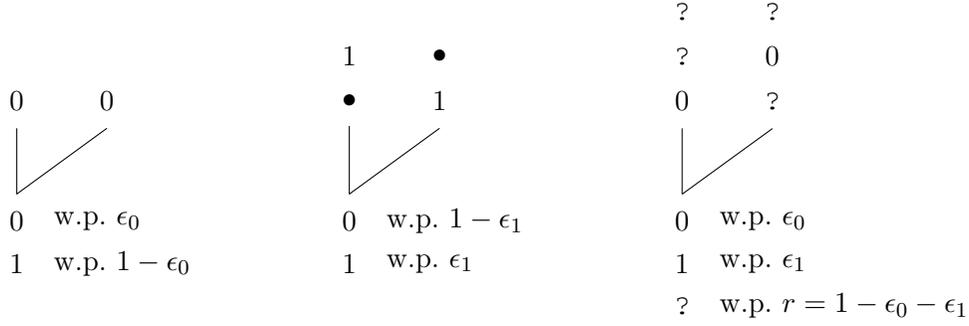

A PCA is \emph{ergodic} if there is a unique probability measure $\mu_\infty$ such that the Markov chain $Z(t)$ of the PCA verifies: for any $\mu_0\in\{0,1\}^\ZZ$ such that the initial configuration $Z(0)$ is $\mu_0$, the law of $Z(t)$ weakly converges to $\mu_\infty$. Here, the ergodicity of $\env{n}$ means that the status of site $(0,0)$ does not depend on the assignment of traps and targets that are sufficiently far away on the board. Observe that if for all $i\in \ZZ$, $X_i(0) \in \{0,1\}$, then for all $i \in \ZZ$, $t \geq 0$, $X_i(t) \neq \; \text{\Qmark}$. In particular, if the PCA $\env{n}$ is ergodic, then its unique invariant measure does not contain the state $\text{\Qmark}$, meaning that the probability of draw is null.

\begin{theorem}\label{thm:main}
  For $n\in\{2,3\}$ and $(\epsilon_0,\epsilon_1)\in[0,\frac{1}{2}]^2\setminus\{(0,0)\}$, the PCA $\env{n}$ is ergodic, and so there is almost surely no draw in the percolation game.
\end{theorem}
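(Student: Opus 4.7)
The plan is to use the envelope interpretation of $\env{n}$ together with a random-walk analysis of the boundary of the \Qmark-region, and inside that analysis to apply Casse's decorrelated-islands framework to replace the surrounding \{0,1\}-sea by its (yet unknown) equilibrium.

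First I would reduce the theorem to showing that under $\env{n}$ the \Qmark-density vanishes: ergodicity of the hard-core PCA $\hard{n}$ is equivalent, via the basic-coupling interpretation of the third state, to the property that starting from any configuration in $\{0,1,\text{\Qmark}\}^\ZZ$ one has $\prob{X_0(t)=\text{\Qmark}}\to 0$. Since a \Qmark\ can only be produced from a neighborhood that already contains a \Qmark, a monotonicity argument and translation invariance let me restrict to the case of a single finite \Qmark-interval $[a,b]$ embedded in a $\{0,1\}$-sea, and to track the joint evolution of its endpoints $(a_t,b_t)$, the goal being to show they collide almost surely.

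Next I would derive the one-step kernel for $(a_t,b_t)$. The three cases of the update rule give two structural facts: an all-$\{0,\text{\Qmark}\}$ neighborhood containing at least one \Qmark\ produces a new \Qmark\ with probability exactly $r=1-\epsilon_0-\epsilon_1$, while any neighborhood that contains a $1$ is deterministically resolved to a cell in $\{0,1\}$. Consequently a $1$ lying just outside the \Qmark-interval \emph{forces} the boundary to contract. Combining this with the decorrelated-islands framework of~\cite{Casse23}, which shows that cells deep inside a large $\{0,1\}$-island are effectively distributed according to the stationary law of $\hard{n}$, I obtain an explicit expression for the increment of each boundary in terms of $\epsilon_0$, $\epsilon_1$ and the local content of the island.

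The main obstacle is the regime in which $r$ is close to $1$ (both $\epsilon_0,\epsilon_1$ small), where a naive one-step drift does not give contraction, because the boundary cell is typically $0$ rather than $1$. To circumvent this I would apply the random-walk method to \emph{excursions} of the boundary rather than to single steps: the enlarged chain $(a_t,b_t,\text{window content})$ is analyzed via a Lyapunov function, or regeneration times, with the hypothesis $(\epsilon_0,\epsilon_1)\in[0,\tfrac12]^2\setminus\{(0,0)\}$ controlling the stationary law of the windowed process so as to guarantee integrable excursions of negative mean drift. For $n=3$, the chain must additionally track a second cell on each side of the \Qmark-interval because neighborhoods of the form $(0,0,\text{\Qmark})$ or $(0,\text{\Qmark},0)$ can still produce fresh \Qmark's; with this enlarged state space the same excursion argument applies, yielding $b_t-a_t\to 0$ almost surely and hence the theorem.
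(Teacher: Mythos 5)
Your overall strategy (envelope PCA plus a random-walk analysis of the interface between the \Qmark-region and the $\{0,1\}$-region, with a multi-step drift argument for the states where one step is not enough) is in the same spirit as the paper's, but two of your reductions do not work as stated. First, the geometry is inverted: you propose to track a \emph{finite} \Qmark-interval immersed in a $\{0,1\}$-sea and to describe that sea by ``the stationary law of $\hard{n}$''. That law is precisely the unknown object whose uniqueness the theorem asserts, so the argument is circular, and the decorrelated-islands framework of \cite{Casse23} does not supply it: its whole point is to avoid it by working in the opposite picture, a finite $\{0,1\}$-island in an all-\Qmark\ sea, where the exterior is trivial and every boundary transition is explicit (a cell whose neighbourhood is all \Qmark\ becomes $0$, $1$, \Qmark\ with probabilities $\epsilon_0$, $\epsilon_1$, $r$). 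Moreover the monotonicity goes the wrong way for your reduction: since a \Qmark\ is only created from a neighbourhood already containing a \Qmark, the relevant worst case is the all-\Qmark\ initial condition, which is not dominated by any configuration with a single finite \Qmark-interval. The statement one actually proves is that a $\{0,1\}$-island, once created, grows linearly with positive probability.

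Second, the heart of the proof is missing. Saying that the bad regime is handled ``via a Lyapunov function, or regeneration times, with the hypothesis $(\epsilon_0,\epsilon_1)\in[0,\frac12]^2\setminus\{(0,0)\}$ controlling the stationary law of the windowed process'' names a technique but proves nothing: one must identify which boundary windows have insufficient one-step drift (for $n=2$ it is exactly $f_t=(0,0)$; for $n=3$, the states $(0,0,0,0)$ and $(1,0,0,0)$), show that the two-step drift suffices there (this is the content of Lemma~\ref{LemMinMax}, which converts a state-dependent choice of one or two steps into a lower bound on the asymptotic drift), and then verify explicit polynomial inequalities in $(\epsilon_0,\epsilon_1)$ over $[0,\frac12]^2$ such as \eqref{eq:1stepn21} and \eqref{eq:2stepn200}, against the threshold $-\frac{n-1}{2}$ coming from the left/right symmetry of Remark~\ref{rem:symm}. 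These computations are where essentially all the work lies; without them the claim that the drift is favourable on the whole stated parameter range is unsupported. Note also that for $n=3$ the analysis requires a boundary window of size $4$, not merely ``a second cell on each side''.
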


The result above was first proved for $n=2$ using weight functions~\cite{HMM19}. The method was then adapted  for $n=3$, with highly intricate computations~\cite{BKPR22}. The aim of the present article is to provide another shorter proof, based on the method of decorrelated islands~\cite{Casse23}.

Note that the PCA $\env{n}$ can be seen as the \emph{envelope PCA} of the \emph{hard-core PCA} $\hard{n}$. By definition, it is the binary PCA whose associated Markov chain $X(t)=(X_i(t): i \in \ZZ) \in \{0,1\}^\ZZ$ satisfies:
\begin{displaymath}
  \prob{X_i(t+1) = a~|~\forall k \in \{0,\dots,n-1\}\, X_{i+k}(t) = 0} =
  \begin{cases}
    \epsilon_0 & \text{if } a=0;\\
    1-\epsilon_0 & \text{if } a=1.
  \end{cases}
\end{displaymath}

\begin{displaymath}
  \prob{X_i(t+1)=a~|~\exists k \in \{0,\dots,n-1\}\, X_{i+k}(t)=1} =
  \begin{cases}
    1-\epsilon_1 &  \text{if } a=0;\\
    \epsilon_1&  \text{if } a=1.
  \end{cases}
\end{displaymath}
The transitions are illustrated on Figure~\ref{fig:trans3} for $n=3$. The PCA $\hard{n}$ corresponds to the restriction of $\env{n}$ to configurations that do not contain the state $\text{\Qmark}$. Envelope PCA  are a practical tool to prove the ergodicity of PCA in the high noise regime, using the fact that the ergodicity of the envelope PCA implies the ergodicity of the associated PCA~\cite{BMM13}. 

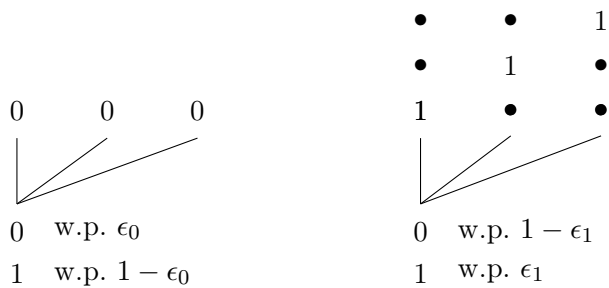
\begin{figure}
  \begin{center}
  \begin{tikzpicture}[scale=0.8]
    \foreach \i in {0,1,2}{
      \node [fill=none,inner sep=4pt,shape=circle] (voisin\i) at (\i*1.5cm,0) {0};    
    }
    \node[fill=none,inner sep=4pt,shape=circle] (cell0) at (0,-2) [label=right: w.p.\ $\epsilon_0$] {0}; 
    \node [fill=none,inner sep=4pt,shape=circle] at (0,-2.7) [label=right: w.p.\ $1-\epsilon_0$] {1};
    \draw [-] (voisin0.south) -- (cell0.north);
    \draw [-] (voisin1.south) -- (cell0.north);
    \draw [-] (voisin2.south) -- (cell0.north);
  \end{tikzpicture} \hspace{2cm}
  \begin{tikzpicture}[scale=0.8]
    \node [fill=none,inner sep=4pt,shape=circle] (voisin0) at (0,0) {1};    
    \node [fill=none,inner sep=4pt,shape=circle] (voisin1) at (1.5,0) {\textbullet};
    \node [fill=none,inner sep=4pt,shape=circle] (voisin1) at (3,0) {\textbullet};
    \node [fill=none,inner sep=4pt,shape=circle] (voisin0) at (0,.75) {\textbullet};    
    \node [fill=none,inner sep=4pt,shape=circle] (voisin1) at (1.5,.75) {1};
    \node [fill=none,inner sep=4pt,shape=circle] (voisin1) at (3,.75) {\textbullet};
    \node [fill=none,inner sep=4pt,shape=circle] (voisin0) at (0,1.5) {\textbullet};    
    \node [fill=none,inner sep=4pt,shape=circle] (voisin1) at (1.5,1.5) {\textbullet};
    \node [fill=none,inner sep=4pt,shape=circle] (voisin1) at (3,1.5) {1};
    \node [fill=none,inner sep=4pt,shape=circle] (voisin0) at (0,0) {1};    
    \node [fill=none,inner sep=4pt,shape=circle] (voisin1) at (1.5,0) {\textbullet};
    \node [fill=none,inner sep=4pt,shape=circle] (voisin2) at (3,0) {\textbullet};
    \node[fill=none,inner sep=4pt,shape=circle] (cell0) at (0,-2) [label=right: w.p.\  $1-\epsilon_1$] {0}; 
    \node [fill=none,inner sep=4pt,shape=circle] at (0,-2.7) [label=right: w.p.\  $\epsilon_1$] {1};
    \draw [-] (voisin0.south) -- (cell0.north);
    \draw [-] (voisin1.south) -- (cell0.north);            
    \draw [-] (voisin2.south) -- (cell0.north);
  \end{tikzpicture}
\end{center}
\caption{Transition probability of the hard-core PCA $H_3$, where dots mean that there can be either a 0 or a 1} \label{fig:trans3}
\end{figure}

Finally, let us also mention that $H_n$ can be seen as a CA with \emph{double errors}: it is a PCA obtained from a deterministic CA, on which we add an error depending on the expected state ($\epsilon_0$ if we expect a $1$ and $\epsilon_1$ if we expect a $0$).

\paragraph{Ergodicity of PCA.} The ergodicity of PCA has been studied using various methods (coupling, entropy, Fourier analysis, weight functions \ldots) and still raises many questions, see~\cite{TOOM90,MM14b,MST19} and the references therein. The hard-core PCA has attracted particular interest, since in addition to its connection to percolation games, it has long been known for its connection with statistical physics and with the enumeration of directed animals. For $\epsilon_1=0$, $\hard{n}$ has a Markovian invariant measure and its ergodicity is easier to prove~\cite{HMM19}, but for other values of the parameters, no general method is known.

\paragraph{Outline of the article.} In Section~\ref{sec:Heuristic}, we present the sketch of our proof of Theorem~\ref{thm:main}. In Section~\ref{sec:di}, we recall the notion of decorrelated islands introduced in~\cite{Casse23}, then in Section~\ref{sec:ni}, we explain the new idea that allows to handle the case of the hard-core PCA. In Section~\ref{sec:proof2}, we prove Theorem~\ref{thm:main} for $n=2$. This requires some computations that we present in details. In Section~\ref{sec:proof3}, we prove Theorem~\ref{thm:main} when $n=3$, using the same method. 

\section{Sketch of the proof}\label{sec:Heuristic}

\subsection{Decorrelated islands method} \label{sec:di}

Let us consider the evolution of the PCA $\env{n}$, from the initial configuration $\text{\Qmark}^\ZZ$. 
\begin{itemize}[label=$\bullet$]
    \item With probability $(\epsilon_0+\epsilon_1)^k$, the $k$ consecutive cells of indices $0,\dots,k-1$ go from state~$\text{\Qmark}$ to state $0$ or $1$, i.e.\ for all $i \in \{0,\dots,k-1\}$, $X_i(1) \in \{0,1\}$. We call such a  sequence of consecutive cells a \emph{decorrelated island}, as the states of these cells do not depend on the initial configuration.
    \item Our goal is to study the evolution of the size of a decorrelated island, that is a (maximal) set of consecutive cells where the symbol~$\text{\Qmark}$ does not appear. With probability~$1$, there exists a time $t_0\geq 0$ when such a decorrelated island is created. For $t\geq t_0$, we denote by $i_t$ and $j_t$ the positions of the left and right boundary of the decorrelated island, respectively. So, cells between $i_t$ and $j_t$ are in state $0$ or $1$, while $i_t-1$ and $j_t+1$ are in state \Qmark.
    \item If $j_t-i_t\to_{t\to\infty}\infty$ with positive probability, this means that the island grows, so that the symbols~\Qmark~progressively disappear, implying that the PCA is ergodic.
\end{itemize}

To study the behaviour of the left boundary $i_t$, we only remember the states of the first $m$ cells of the island, given by $f_t = (X_{i_t},X_{1+i_t},\dots,X_{m-1+i_t})$, with a well-chosen value of $m$, selected according to the size of the neighborhood $n$. We proceed in a similar way for the right boundary $j_t$. In the following, we will set $m=2$ when $n=2$, and $m=4$ when $n=3$.

This method, applied with $m=1$, gives a sufficient ergodicity criterion for PCA with binary alphabet and two-size neighborhood~\cite{Casse23}. This allows to handle the case of $12$ out of $16$ deterministic cellular automata with error $\epsilon > 0$. By taking $m=2$, it also gives a new proof of the ergodicity of the PCA $\env{2}$ (and so of $\hard{2}$) when $\epsilon_0 = \epsilon_1 > 0$.

\subsection{The main improvement of the method} \label{sec:ni}

In this article, we improve the method developed in~\cite{Casse23} to give a new and shorter proof of Theorem~\ref{thm:main}. The improvement consists in considering one or two time steps at once, according to the value of the states of the boundary, and not just one time step in every situation.

First, observe that in order to prove that $\prob{j_t-i_t\to\infty}$ is positive, since the size of negative steps is bounded (by $1$), it is enough to show that the mean asymptotic drift
\begin{displaymath}
  D = \lim_{t\to\infty} \esp{J_t} - \lim_{t\to\infty}\esp{I_t}
\end{displaymath}
of $(j_t-i_t)_{t\geq t_0}$ is positive, where $J_t$ is the increment of $(j_{t})$, i.e.\ $J_t = j_{t+1}-j_t$ and $I_t$ the one of $(i_{t})$. The following remark in fact allows one to restrict ourselves to the study of the right boundary.

\begin{remark}\label{rem:symm}
Let us denote by $R =\lim_{t\to\infty}\esp{J_t}$ the mean asymptotic drift of the right boundary. Using the symmetry of the transition rule of $\env{n}$ with respect to the left-right symmetry, while taking into account the fact that the neighborhood is not centered, it follows that $D=2R+(n-1)$. Consequently, in the rest of the article, we focus only on the behaviour of the right boundary.
\end{remark}

The innovation of the present work is to study the law of $J_t$ and the one of $J_{t}+J_{t+1}$ according to the values of the states of the boundary $f_t$. This is summed up in the following lemma.

\begin{lemma}\label{LemMinMax}
Consider a homogeneous Markov chain $(j_t,f_t)$ on $\ZZ \times F$, where $F$ is a finite set, and assume that the increment $J_t = j_{t+1}-j_t$ depends only on $f_t$. Precisely, assume that for any $f \in F$, for any $k\in\ZZ$, and for any $t\geq 0$, $$\esp{J_t| f_t =f, j_t=k } = \esp{j_1|f_0=f,j_0=0}.$$ Then, if the limit $R = \lim_{t \to \infty} \esp{J_t}$ exists and is finite, it satisfies:
  \begin{align}\label{eq:rightminmax}
    R \geq \min_{f \in F} \max \left( \esp{J_t | f_t = f},\frac{1}{2} \esp{J_t+J_{t+1} | f_t = f} \right).
  \end{align}
\end{lemma}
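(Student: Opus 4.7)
The plan is an amortized drift analysis with variable-length time blocks: at each visit to a state $f$ I look ahead either one or two time steps, choosing whichever yields the larger local rate, so that the per-time-unit progress is everywhere at least $m$. Writing $u(f)=\esp{J_t\mid f_t=f}$ and $v(f)=\esp{J_t+J_{t+1}\mid f_t=f}$, both well-defined functions of $f$ alone by the homogeneity hypothesis, set $M(f)=\max(u(f),v(f)/2)$ and $m=\min_{f\in F} M(f)$. Define a step-length $\sigma(f)\in\{1,2\}$ realizing the maximum: $\sigma(f)=1$ if $u(f)\geq v(f)/2$, and $\sigma(f)=2$ otherwise; with this choice $\sigma(f)\,M(f)$ equals $u(f)$ in the first case and $v(f)$ in the second.

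Next, I would build the sequence of stopping times $T_0=0$, $T_{k+1}=T_k+\sigma(f_{T_k})$, so that $N\leq T_N\leq 2N$. The strong Markov property of $(j_t,f_t)$ together with the hypothesis on the conditional drift give
\begin{displaymath}
\esp{j_{T_{k+1}} - j_{T_k} \mid \mathcal{F}_{T_k}} = \sigma(f_{T_k})\,M(f_{T_k}) \geq (T_{k+1}-T_k)\,m.
\end{displaymath}
Summing for $k=0,\ldots,N-1$ (optional stopping is immediate as $T_N\leq 2N$ is bounded) yields
\begin{displaymath}
\esp{j_{T_N}} - j_0 \geq m\,\esp{T_N}.
\end{displaymath}

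The main obstacle is converting this amortized bound into $R\geq m$: I need $\esp{j_{T_N}}/\esp{T_N}\to R$ as $N\to\infty$. Because $j_t-\sum_{s<t}u(f_s)$ is a martingale (its increment has conditional expectation $u(f_t)-u(f_t)=0$), optional stopping at $T_N$ gives $\esp{j_{T_N}}=\esp{\sum_{s<T_N}u(f_s)}$. Since $(f_t)$ is a finite-state Markov chain (its dynamics are translation-invariant in $j$, so $f_t$ alone inherits the Markov property), the Birkhoff ergodic theorem gives $X_N:=\tfrac1{T_N}\sum_{s<T_N}u(f_s)\to R$ almost surely, hence in $L^2$ by the boundedness of $u$ on the finite set $F$. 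Writing $\esp{j_{T_N}}=\esp{X_N T_N}=\esp{X_N}\esp{T_N}+\mathrm{Cov}(X_N,T_N)$, a Cauchy--Schwarz bound combined with $\mathrm{Var}(T_N)\leq (2N)^2$ and $\esp{T_N}\geq N$ shows that $\mathrm{Cov}(X_N,T_N)/\esp{T_N}=O(\sqrt{\mathrm{Var}(X_N)})\to 0$. Hence $\esp{j_{T_N}}/\esp{T_N}\to R$, and comparing with the amortized bound yields $R\geq m$. The only subtle point is the irreducibility of $(f_t)$ on the relevant class so that the ergodic theorem applies; in the paper's PCA applications this is automatic, and otherwise one decomposes into ergodic classes and works on whichever one carries the mass of the limiting distribution used to define $R$.
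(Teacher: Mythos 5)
Your proof is correct, and its core idea coincides with the paper's: in both cases one partitions $F$ according to whether the one-step rate $u(f)$ or the two-step rate $v(f)/2$ is larger, and then advances in blocks of one or two time steps accordingly, so that each block contributes at least $m=\min_f M(f)$ per unit of time. Where you differ is in the machinery used to turn this amortized, blockwise bound into a statement about $R$. The paper encodes the variable-length blocks by an auxiliary ``slowed-down'' chain $(\hat{j}_t,\hat{f}_t,w_t)$ with a waiting flag, computes its asymptotic drift $\hat{R}$ against the invariant measure $\tau$ of $(\hat f_t,w_t)$ (using $\tau(f,0)=\tau(f,1)$ on $F_2$), and then argues $\hat R=R$ because the two position processes differ by at most one increment. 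You instead keep the original chain and introduce the stopping times $T_k$, get $\esp{j_{T_N}}-j_0\geq m\,\esp{T_N}$ by optional stopping, and then identify $\esp{j_{T_N}}/\esp{T_N}$ with $R$ via the martingale $j_t-\sum_{s<t}u(f_s)$ together with a covariance estimate exploiting $N\leq T_N\leq 2N$ and $\mathrm{Var}(X_N)\to 0$. Your route is somewhat longer but more explicit about why the random time change does not distort the drift; the paper's route hides that issue inside the construction of $(\hat j_t,w_t)$ and a one-line appeal to the ergodic theorem. Both arguments share the same unstated hypothesis, namely that the finite-state chain $(f_t)$ is sufficiently irreducible for the ergodic theorem to identify the almost-sure Birkhoff limit (respectively, the invariant measure $\tau$) with the quantity defining $R$; you flag this explicitly, and your proposed fix should be stated as ``$R$ is the absorption-probability-weighted average of the per-class drifts, so it suffices to prove the bound within each recurrent class'' rather than working only on the class ``carrying the mass,'' but this is a presentational point, not a gap, and in the PCA application the chain is irreducible.
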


\begin{proof}
First, we split the set $F$ into two disjoint parts:
\begin{displaymath}
 F_1 = \left\{f \in F : \esp{J_0 | f_0 = f} \geq \frac{1}{2} \esp{J_0+J_1 | f_0 = f} \right\} \text{ and } F_2 = F \backslash F_1.  
\end{displaymath}
The set $F_1$ represents the states for which the mean increment is larger when we consider a single time step compared to two time steps, and $F_2$ those for which it is the reverse.

We now define another Markov chain $(\hat{j}_t,\hat{f}_t,w_t)_{t\geq0}$ as follows:
\begin{itemize}[label=$\bullet$]
    \item $(\hat{j}_0,\hat{f}_0,w_0) = (j_0,f_0,\ind_{f_0 \in F_2})$,
    \item if $w_t = 0$, then $\hat{j}_{t+1} = j_{t+1}$, $\hat{f}_{t+1} = f_{t+1}$ and $w_{t+1} = \ind_{f_{t+1} \in F_2}$,
    \item if $w_{t} =1$, then  $\hat{j}_{t+1} = j_{t}$, $\hat{f}_{t+1} = f_t$ and $w_{t+1} = 0$.
\end{itemize}

As a consequence:
\begin{align*}
    \hat{J}_t = \hat{j}_{t+1} - \hat{j}_t = J_t \text{ when } w_t=0 \text{ and } f_t\in F_1;\\
    \hat{J}_t = 0 \text{ when } w_t=1 \text{ and } f_t\in F_2;\\
    \hat{J}_t = J_{t-1} + J_t \text{ when } w_t=0 \text{ and } f_t\in F_2.
\end{align*}
So, compared to $(j_t,f_t)_{t\geq 0}$, the new Markov chain moves one step forward when $\hat{f}_t$ is in $F_1$, while when it is in $F_2$, the state does not change at the first time step (when $w_t=1$) and then moves of the equivalent of two steps at a time (when $w_t=0$). The variable $w_t$ can thus be thought as a waiting time. \\

Let us now consider the value $\hat{R} = \lim_{t\to\infty} \esp{\hat{J}_t}$.
We show that $\hat{R}\geq M$, where $M$ is the term on the right-hand side of (\ref{eq:rightminmax}).
We have 
\begin{displaymath}
    \hat{R} = \sum_{f \in F_1} \tau(f,0) \esp{\hat{J}_t | \hat{f}_t = f, w_t=0} + \sum_{f \in F_2}  \left( \tau(f,1) \times 0 + \tau(f,0) \esp{\hat{J}_t | \hat{f}_t = f, w_t=0}\right)
\end{displaymath}
where $\tau$ is the invariant measure of the Markov chain $(\hat{f}_t,w_t)_{t\geq 0}$, which exists since $F$ is finite.

By definition of $(\hat{j}_t,\hat{f}_t,w_t)_{t\geq 0}$, we have for any $f\in F_1$,
\begin{align*}
    \esp{\hat{J}_t | \hat{f}_t = f, w_t=0} = \max \left( \esp{J_t | f_t = f},\frac{1}{2} \esp{J_t + J_{t+1} | f_t = f} \right) \geq M
\end{align*}
and for any $f \in F_2$, 
\begin{align*}
     \esp{\hat{J}_{t} | \hat{f}_t = f, w_t=0} = 2\max \left( \esp{J_t | f_t = f}, \frac{1}{2} \esp{J_t + J_{t+1} | f_t = f} \right) \geq 2M.
\end{align*} 
By construction of the new Markov chain, for any $f \in F_2$, $\tau(f,0) = \tau(f,1)$. So we obtain 
$\hat{R} \geq M \sum_{(f,w) \in (F_1\times\{0\}) \cup (F_2 \times \{0,1\})} \tau(f,w) = M$.

To conclude, we only need to prove that $R = \hat{R}$. We recall that the equality $(j_t,f_t) = (\hat{j}_t,\hat{f}_t)$ holds at least once in two, and by the ergodic theorem,
\begin{align*}
    \hat{R} = \lim_{t \to \infty} \esp{\hat{J}_t} = \lim_{t \to \infty} \frac{\esp{\hat{j}_t-j_0}}{t} \text{ and } R = \lim_{t \to \infty} \esp{J_t} = \lim_{t \to \infty} \frac{\esp{j_t-j_0}}{t}.
\end{align*}
So, we have $\hat{R} = R$. \qed
\end{proof}
Using Remark~\ref{rem:symm} together with Lemma~\ref{LemMinMax}, we can then prove Theorem~\ref{thm:main} by just showing that, for any $n \in \{2,3\}$ and for every $t\geq t_0$,
\begin{equation} \label{eq:goal}
  R \geq \min_{f \in \{0,1\}^m}\left( \max \left( \esp{J_t|f_t = f},\frac{1}{2}\esp{J_{t}+J_{t+1} |f_t = f} \right) \right) > -\frac{n-1}{2}.
\end{equation}

\begin{remark}\label{rem:modified}
We will actually prove the previous inequality not for $(j_t)$ but for some modification $(\tilde j_t)$, introduced in Sections~\ref{sec:mod2} for $n=2$ and in~\ref{sec:mod3} for $n=3$.
\end{remark}

\section{Proof of Theorem~\ref{thm:main} when $n=2$} \label{sec:proof2}

\subsection{Modified boundary and star state}\label{sec:mod2}

To give a formal proof of Theorem~\ref{thm:main}, we now introduce two additional notions.

First, as mentioned in Remark~\ref{rem:modified}, we define a slight modification of the position $j_t$, that depends on the states of the boundaries. To do so, let us consider a decorrelated island $(X_{i_t},\dots,X_{j_t})$ not reduced to a singleton at time $t$. To study the behaviour of its right boundary, we introduce the modified position $\tilde{j}_t$ defined by

\begin{displaymath}
  \Tilde{j}_{t} = \begin{cases}
    j_t & \text{if } (X_{-1+j_t},X_{j_t}) \in\{(0,1),(1,1)\};\\
    j_t-\frac{1}{2} & \text{if } (X_{-1+j_t},X_{j_t})=(0,0);  \\
    j_t-1 & \text{if } (X_{-1+j_t},X_{j_t}) = (1,0).
  \end{cases}
\end{displaymath}
As a result of this technical change, if $\epsilon_0=\epsilon_1=0$ (error-free regime), then the deterministic drift satisfies $\tilde{J}_t = -1/2$ for any $t$, whereas $J_t$ oscillates between $0$ and $-1$. The purpose of this modification is therefore only to simplify the analysis. Note that the cells we will consider at time $t$ in the right boundary will still be given by $f_t = (X_{-1+j_t},X_{j_t})$. The modification $\Tilde{i}_{t}$ of $i_t$ is defined analogously, using the left-right symmetry of the transition rule.

Second, we introduce a star state, that we denote by $*$. This state will be used to encode the state of a cell of the decorrelated island whose exact value ($0$ or $1$) is not remembered. Indeed, if we retain all the information about the island, the computation is too complex. The state $*$ will thus enable us to establish some bounds for the behaviour of the boundaries. This is different from the state $\text{\Qmark}$, which represents a cell whose value is completely unknown.

\subsection{Transition probabilities}

In this section, we study the behaviour of the Markov chain $(\Tilde{j}_{t},f_t)$ where we recall that $f_t = (X_{-1+j_t},X_{j_t})$. Our goal is to prove that
\begin{equation}
 \esp{ \Tilde{J}_t  | f_t \in \{(0,1),(1,0),(1,1)\} } > - \frac{1}{2} \text{ and } \esp{\Tilde{J}_t + \Tilde{J}_{t+1} | f_t = (0,0) } > - 1,
\end{equation}
which implies Equation~\eqref{eq:goal} when $n=2$, and so Theorem~\ref{thm:main} for $n=2$.

In the following, we assume that $j_t-i_t \geq 5$. It is not a loss of generality to achieve the result, and it allows to avoid any problem of dependency between $(\Tilde{i}_{t+1},(X_{i_{t+1}},X_{1+i_{t+1}}))$ and $(\Tilde{j}_{t+1},(X_{-1+j_{t+1}},X_{j_{t+1}}))$.

\subsubsection{Case $f_t \in \{(0,1),(1,0),(1,1)\}$:} For these three types of right boundaries, the transition probabilities are the same, as illustrated on Figure~\ref{fig:F1}. 
 
More precisely, if $\Tilde{j}_{t} = \Tilde{j}$ and if $f_t=f$ belongs to $\{(0,1),(1,0),(1,1)\}$, then
\begin{equation}
  (\Tilde{j}_{t+1},f_{t+1}) =
  \begin{cases}
    (\Tilde{j}-1,(1,0)) & \text{w.p.\ } \epsilon_1(1-\epsilon_1)r \\
    (\Tilde{j}-\frac{1}{2},(0,0)) & \text{w.p.\ } (1-\epsilon_1)^2r \\
    (\Tilde{j},(0,1)) & \text{w.p.\ } (1-\epsilon_1) \epsilon_1 r \\
    (\Tilde{j},(1,1)) & \text{w.p.\ } \epsilon_1^2r \\
    (\Tilde{j},(1,0)) & \text{w.p.\ } \epsilon_1 \epsilon_0 r \\
    (\Tilde{j}+\frac{1}{2},(0,0)) & \text{w.p.\ } (1-\epsilon_1) \epsilon_0 r \\
    (\Tilde{j}+1,(0,1)) & \text{w.p.\ } (1-\epsilon_1) \epsilon_1 r \\
    (\Tilde{j}+1,(1,1)) & \text{w.p.\ } \epsilon_1^2r \\
    (\Tilde{j}+k+1,(1,0)) & \text{w.p.\ } (\epsilon_0+\epsilon_1)^k \epsilon_1 \epsilon_0 r \text{ for } k \geq 0\\
    (\Tilde{j}+k+\frac{3}{2},(0,0)) & \text{w.p.\ } (\epsilon_0+\epsilon_1)^k \epsilon_0^2 r \text{ for } k \geq 0 \\
    (\Tilde{j}+k+2,(0,1)) & \text{w.p.\ } (\epsilon_0+\epsilon_1)^k \epsilon_0 \epsilon_1 r \text{ for } k \geq 0 \\
    (\Tilde{j}+k+2,(1,1)) & \text{w.p.\ } (\epsilon_0+\epsilon_1)^k \epsilon_1^2 r \text{ for } k \geq 0
  \end{cases} \label{eq:trans1}
\end{equation}

\begin{figure}
  \centering
  \scalebox{0.7}{%
    \begin{tikzpicture}[scale= 0.8]

        \begin{scope}[xshift=-2*2.5 cm,yshift={-0.5 cm}]
            \draw(0:0) ++ (45:1) -- ++ (0:-0.95cm);
        \end{scope}
        
        \begin{scope}[xshift=2.5 cm,yshift={-0.5 cm}]
            \draw (0:0) ++ (45:1) -- ++ (0:1.95cm);
            \draw (0:0) ++ (45:1) -- ++ (90:1.45cm);
            \draw (0:0) ++ (45:1) -- ++ (37:3.05cm);
        \end{scope}
        \begin{scope}[xshift=5 cm,yshift={-0.5 cm}]
            \draw (0:0) ++ (45:1) -- ++ (0:1.95cm);
            \draw (0:0) ++ (45:1) -- ++ (90:1.45cm);
            \draw (0:0) ++ (45:1) -- ++ (37:3.05cm);
        \end{scope}
        \begin{scope}[xshift=7.4 cm,yshift={-0.5 cm}]
            \draw(0:0) ++ (45:1) -- ++ (0:0.95cm);
            \draw (0:0) ++ (45:1) -- ++ (37:1.3cm);
        \end{scope}

        \foreach \i in {-2}{
				\begin{scope}[xshift=\i*2.5 cm,yshift={-0.5 cm}]
                    \draw (0:0) ++ (45:1) -- ++ (0:1.95cm);
                    \draw (0:0) ++ (45:1) -- ++ (90:1.45cm);
                    \draw (0:0) ++ (45:1) -- ++ (37:3.05cm);
                    \node [fill=white,inner sep=5.5pt,shape=circle,draw] at (45:1cm) {};
                    \node [fill=white,inner sep=6.5pt,shape=circle,draw] at (45:1cm)  {$*$};
				\end{scope}
                }
                
        \foreach \i in {-1,0}{
				\begin{scope}[xshift=\i*2.5 cm,yshift={-0.5 cm}]
                    \draw (0:0) ++ (45:1) -- ++ (0:1.95cm);
                    \draw (0:0) ++ (45:1) -- ++ (90:1.45cm);
                    \draw (0:0) ++ (45:1) -- ++ (37:3.05cm);
                    \draw (0,-2.5) rectangle +(1.4,4) [fill=white] ;
                    \node [fill=white,inner sep=5.5pt,shape=circle,draw] at (45:1cm){};
                    \node [fill=white,inner sep=6pt,shape=circle,draw] at (45:1cm) [label=below: $1-\epsilon_1$] {$0$};
				\end{scope}
                \begin{scope}[xshift=\i*2.5 cm,yshift={-2.5 cm}]
                    \node [fill=white,inner sep=6pt,shape=circle,draw] at (45:1cm) [label=below: $\epsilon_1$] {$1$};
                \end{scope}
				}
        \foreach \i in {3}{
            \begin{scope}[xshift=\i*2.5 cm,yshift={-0.5 cm}]
                \draw (0:0) ++ (45:1) -- ++ (90:1.45cm);
            \end{scope}
            \begin{scope}[xshift=\i*2.5 cm,yshift={1.5 cm}]
                \draw (0:0) ++ (45:1) -- ++ (0:-1.95cm);
            \end{scope}
        }
        \foreach \i in {1,...,3}{
            
            \begin{scope}[xshift=\i*2.5 cm,yshift={-0.5 cm}]
            \draw (0,-4.5) rectangle +(1.4,6) [fill=white] ;
            \node [fill=white,inner sep=6pt,shape=circle,draw] at (45:1cm) [label=below: $\epsilon_0$]{$0$};
            \end{scope}
            \begin{scope}[xshift=\i*2.5 cm,yshift={-2.5 cm}]
            \node [fill=white,inner sep=6pt,shape=circle,draw] at (45:1cm) [label=below: $\epsilon_1$]{$1$};
            \end{scope}
            \begin{scope}[xshift=\i*2.5 cm,yshift={-4.5 cm}]
            \node [fill=white,inner sep=6pt,shape=circle,draw] at (45:1cm) [label=below: $r$]{$\text{\Qmark}$};
            \end{scope}
        }

        \begin{scope}[xshift=0 cm, yshift={5.1 cm}]
            \node[] at (45:1cm) {$\Tilde{j}$};
        \end{scope}

        \begin{scope}[xshift=3*2.5 cm,yshift={1.5 cm}]
                \draw(0:0) ++ (45:1) ++ (0:-.5cm)  -- ++(0:.45cm);
                \draw(0:0) ++ (45:1) -- ++ (0:0.95cm);
        \end{scope}
            \begin{scope}[xshift=-2*2.5 cm,yshift={1.5 cm}]
                    \draw(0:0) ++ (45:1) -- ++ (0:-0.95cm);
            \end{scope}
        \foreach \i in {-2}{
				\begin{scope}[xshift=\i*2.5 cm,yshift={1.5 cm}]
                    \draw(0:0) ++ (45:1) -- ++ (0:1.95cm);
                    \node [fill=white,inner sep=6pt,shape=circle,draw] at (45:1cm) {$*$};
				\end{scope}
				}
        \foreach \i in {-1}{
				\begin{scope}[xshift=\i*2.5 cm,yshift={1.5 cm}]
                    \draw(0:0) ++ (45:1) -- ++ (0:1.95cm);
                \node [fill=white,inner sep=6pt,shape=circle,draw] at (45:1cm) {$1$};
				\end{scope}
				}
        \foreach \i in {0}{
				\begin{scope}[xshift=\i*2.5 cm,yshift={1.5 cm}]
                    \draw(0:0) ++ (45:1) -- ++ (0:1.95cm);
                    \node [fill=white,inner sep=6pt,shape=circle,draw] at (45:1cm) {$1$};
				\end{scope}
				}
        \foreach \i in {1}{
				\begin{scope}[xshift=\i*2.5 cm,yshift={1.5 cm}]
                    \draw(0:0) ++ (45:1) -- ++ (0:1.95cm);
                    \node [fill=white,inner sep=6pt,shape=circle,draw] at (45:1cm) {$\text{\Qmark}$};
				\end{scope}
				}
            \foreach \i in {2,3}{
            \begin{scope}[xshift=\i*2.5 cm,yshift={1.5 cm}]
            \node [fill=white,inner sep=6pt,shape=circle,draw] at (45:1cm) {$\text{\Qmark}$};
            \end{scope}
            }

        \begin{scope}[xshift=3*2.5 cm,yshift={2.8 cm}]
                \draw(0:0) ++ (45:1) ++ (0:-.5cm)  -- ++(0:.45cm);
                \draw(0:0) ++ (45:1) -- ++ (0:0.95cm);
        \end{scope}

        \begin{scope}[xshift=3 *2.5 cm,yshift={2.8 cm}]
          \draw (0:0) ++ (45:1) -- ++ (0:-1.95cm);
        \end{scope}

        \begin{scope}[xshift=-2*2.5 cm,yshift={2.8 cm}]
                    \draw(0:0) ++ (45:1) -- ++ (0:-0.95cm);
            \end{scope}
        \foreach \i in {-2}{
				\begin{scope}[xshift=\i*2.5 cm,yshift={2.8 cm}]
                    \draw(0:0) ++ (45:1) -- ++ (0:2cm);
                    \node [fill=white,inner sep=6pt,shape=circle,draw] at (45:1cm) {$*$};
				\end{scope}
				}
        \foreach \i in {-1}{
				\begin{scope}[xshift=\i*2.5 cm,yshift={2.8 cm}]
                    \draw(0:0) ++ (45:1) -- ++ (0:1.95cm);
                \node [fill=white,inner sep=6pt,shape=circle,draw] at (45:1cm) {$0$};
				\end{scope}
				}
        \foreach \i in {0}{
				\begin{scope}[xshift=\i*2.5 cm,yshift={2.8cm}]
                    \draw(0:0) ++ (45:1) -- ++ (0:1.95cm);
                    \node [fill=white,inner sep=6pt,shape=circle,draw] at (45:1cm) {$1$};
				\end{scope}
				}
        \foreach \i in {1}{
				\begin{scope}[xshift=\i*2.5 cm,yshift={2.8 cm}]
                    \draw(0:0) ++ (45:1) -- ++ (0:1.95cm);
                    \node [fill=white,inner sep=6pt,shape=circle,draw] at (45:1cm) {$\text{\Qmark}$};
				\end{scope}
				}
            \foreach \i in {2,3}{
              \begin{scope}[xshift=\i*2.5 cm,yshift={2.8 cm}]
            \node [fill=white,inner sep=6pt,shape=circle,draw] at (45:1cm) {$\text{\Qmark}$};
            \end{scope}
            }

        \begin{scope}[xshift=3*2.5 cm,yshift={4.1 cm}]
                \draw(0:0) ++ (45:1) ++ (0:-.5cm)  -- ++(0:.45cm);
                \draw(0:0) ++ (45:1) -- ++ (0:0.95cm);
        \end{scope}

        \begin{scope}[xshift=3 *2.5 cm,yshift={4.1 cm}]
          \draw (0:0) ++ (45:1) -- ++ (0:-1.95cm);
        \end{scope}

        \begin{scope}[xshift=-2*2.5 cm,yshift={4.1 cm}]
                    \draw(0:0) ++ (45:1) -- ++ (0:-0.95cm);
            \end{scope}
        \foreach \i in {-2}{
				\begin{scope}[xshift=\i*2.5 cm,yshift={4.1 cm}]
                    \draw(0:0) ++ (45:1) -- ++ (0:2cm);
                    \node [fill=white,inner sep=6pt,shape=circle,draw] at (45:1cm) {$*$};
				\end{scope}
				}
        \foreach \i in {-1}{
				\begin{scope}[xshift=\i*2.5 cm,yshift={4.1 cm}]
                    \draw(0:0) ++ (45:1) -- ++ (0:1.95cm);
                \node [fill=white,inner sep=6pt,shape=circle,draw] at (45:1cm) {$*$};
				\end{scope}
				}
        \foreach \i in {0}{
				\begin{scope}[xshift=\i*2.5 cm,yshift={4.1 cm}]
                    \draw(0:0) ++ (45:1) -- ++ (0:1.95cm);
                    \node [fill=white,inner sep=6pt,shape=circle,draw] at (45:1cm) {$1$};
				\end{scope}
				}
        \foreach \i in {1}{
				\begin{scope}[xshift=\i*2.5 cm,yshift={4.1 cm}]
                    \draw(0:0) ++ (45:1) -- ++ (0:1.95cm);
                    \node [fill=white,inner sep=6pt,shape=circle,draw] at (45:1cm) {$0$};
				\end{scope}
				}
            \foreach \i in {2,3}{
              \begin{scope}[xshift=\i*2.5 cm,yshift={4.1 cm}]
            \node [fill=white,inner sep=6pt,shape=circle,draw] at (45:1cm) {$\text{\Qmark}$};
            \end{scope}
            }

\end{tikzpicture}
  }
    \caption{Transition probabilities of the different cells when the state $f$ of the boundary belongs to $\{(0,1),(1,0),(1,1)\}$. The first three lines correspond to the three cases $f=(1,0)$, $f=(0,1)$ and $f=(1,1)$. The boundary position $j$ depends on the case, but the modified boundary $\tilde{j}$ is always the same. The boxes provide the different possible states of the cells with their respective probability.}\label{fig:F1}
\end{figure}
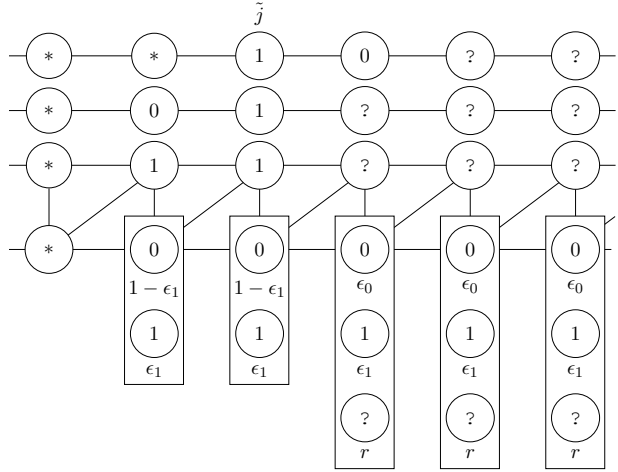

Using Equation~\eqref{eq:trans1}, we can compute the mean increment in one time step for this type of boundary:

\begin{align}
  & \esp{\Tilde{J}_t| f_t = \{(0,1),(1,0),(1,1)\}} \nonumber \\
  & = - \epsilon_1 (1-\epsilon_1)r - \frac{1}{2} (1-\epsilon_1)^2 r + \frac{1}{2} \epsilon_0(1-\epsilon_1)r + \dots + \sum_{k=0}^\infty (k+2) \epsilon_1^2 (1-r)^k r \nonumber \\ 
  & = -\frac{1}{2}r + \epsilon_1 r + \frac{1}{2}\epsilon_0 (1-\epsilon_1)r  + \frac{1}{2} \epsilon_1^2 r  + \frac{1}{2}\epsilon_0^2 +\epsilon_0\epsilon_1 + \epsilon_1^2 + \frac{(1-r)^2}{r} \nonumber \\
  & = -\frac{1}{2} + \underbrace{\frac{1}{2} \epsilon_0 + \frac{1}{2} \epsilon_1 + \epsilon_1 r + \frac{1}{2}\epsilon_0 (1-\epsilon_1)r  + \frac{1}{2} \epsilon_1^2 r  + \frac{1}{2} (1-r)^2 + \frac{1}{2} \epsilon_1^2 + \frac{(1-r)^2}{r}}_{>0}. \label{eq:1stepn21}
\end{align}

This can be checked by hand or with the help of a computer algebra system.

\subsubsection{Case $f_t = (0,0)$:}
For this type of boundary, we look at the increment in two time steps. The first step is given on Figure~\ref{fig:F2}. 

\paragraph{First step:}
As illustrated on Figure~\ref{fig:F2}, if $\Tilde{j}_{t} = \Tilde{j}$ and if the state of the boundary satisfies $f_t = (0,0)$, then
\begin{equation}
(\Tilde{j}_{t+1},f_{t+1}) =
  \begin{cases}
    \left(\geq \Tilde{j}-\frac{3}{2},(*,0)\right) & \text{w.p.\ } \epsilon_0r^2 \\
    (\Tilde{j}-\frac{3}{2} , (1,0)) & \text{w.p.\ } \epsilon_1 \epsilon_0 r \\
    (\Tilde{j}-1, (0,0)) & \text{w.p.\ } \epsilon_0^2 r \\
    (\Tilde{j}-\frac{1}{2}, (0,1)) & \text{w.p.\ } \epsilon_0(1-\epsilon_0)r \\
    (\Tilde{j}-\frac{1}{2} , (1,1)) & \text{w.p.\ } \epsilon_1(1-\epsilon_0)r \\
    (\Tilde{j}-\frac{1}{2} , (*,1)) & \text{w.p.\ } r(1-\epsilon_0)r \\
    (\Tilde{j}-\frac{1}{2} ,(1,0)) & \text{w.p.\ } (1-\epsilon_0) \epsilon_0 r \\
    (\Tilde{j},(0,0)) & \text{w.p.\ }\epsilon_0^2r \\
    (\Tilde{j}+\frac{1}{2},(0,1)) & \text{w.p.\ } \epsilon_0 \epsilon_1 r \\
    (\Tilde{j}+\frac{1}{2},(1,1)) & \text{w.p.\ } (1-\epsilon_0) \epsilon_1 r \\
    (\Tilde{j}+k+\frac{1}{2},(1,0)) & \text{w.p.\ } (\epsilon_0+\epsilon_1)^k \epsilon_1 \epsilon_0 r \\
    (\Tilde{j}+k+1 , (0,0)) & \text{w.p.\ } (\epsilon_0+\epsilon_1)^k \epsilon_0^2 r \\
    (\Tilde{j}+k+\frac{3}{2} , (0,1)) & \text{w.p.\ } (\epsilon_0+\epsilon_1)^k \epsilon_0 \epsilon_1 r \\
    (\Tilde{j}+k+\frac{3}{2}, (1,1)) & \text{w.p.\ } (\epsilon_0+\epsilon_1)^k \epsilon_1^2 r \\
  \end{cases} \label{eq:trans00}
\end{equation}

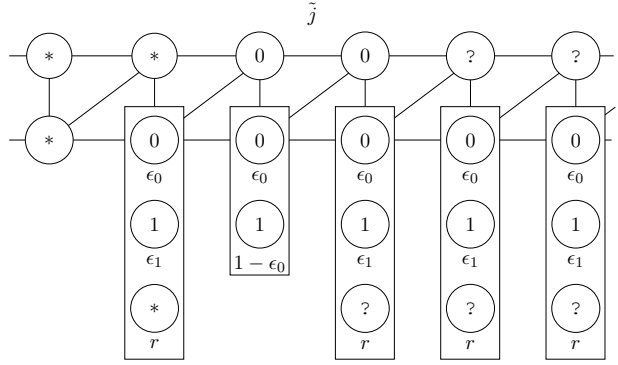
\begin{figure}
    \centering
    \scalebox{0.7}{%
    \begin{tikzpicture}[scale= 0.8]

        \begin{scope}[xshift=-2*2.5 cm,yshift={-0.5 cm}]
            \draw(0:0) ++ (45:1) -- ++ (0:-0.95cm);
        \end{scope}
        
        \begin{scope}[xshift=2.5 cm,yshift={-0.5 cm}]
            \draw (0:0) ++ (45:1) -- ++ (0:1.95cm);
            \draw (0:0) ++ (45:1) -- ++ (90:1.45cm);
            \draw (0:0) ++ (45:1) -- ++ (37:3.05cm);
        \end{scope}
        \begin{scope}[xshift=5 cm,yshift={-0.5 cm}]
            \draw (0:0) ++ (45:1) -- ++ (0:1.95cm);
            \draw (0:0) ++ (45:1) -- ++ (90:1.45cm);
            \draw (0:0) ++ (45:1) -- ++ (37:3.05cm);
        \end{scope}
        \begin{scope}[xshift=7.4 cm,yshift={-0.5 cm}]
            \draw(0:0) ++ (45:1) -- ++ (0:0.95cm);
            \draw (0:0) ++ (45:1) -- ++ (37:1.3cm);
        \end{scope}

        \foreach \i in {-2}{
				\begin{scope}[xshift=\i*2.5 cm,yshift={-0.5 cm}]
                    \draw (0:0) ++ (45:1) -- ++ (0:1.95cm);
                    \draw (0:0) ++ (45:1) -- ++ (90:1.45cm);
                    \draw (0:0) ++ (45:1) -- ++ (37:3.05cm);
                    \node [fill=white,inner sep=5.5pt,shape=circle,draw] at (45:1cm) {};
                    \node [fill=white,inner sep=6.5pt,shape=circle,draw] at (45:1cm) {$*$};
				\end{scope}
        }
                
        \foreach \i in {-1}{
				\begin{scope}[xshift=\i*2.5 cm,yshift={-0.5 cm}]
                    \draw (0:0) ++ (45:1) -- ++ (0:1.95cm);
                    \draw (0:0) ++ (45:1) -- ++ (90:1.45cm);
                    \draw (0:0) ++ (45:1) -- ++ (37:3.05cm);
                    \node [fill=white,inner sep=6pt,shape=circle,draw] at (45:1cm) [label=below: $1-\epsilon_1$] {$0$};
                \end{scope}
                \begin{scope}[xshift=\i*2.5 cm,yshift={-0.5 cm}]
                    \draw (0,-4.5) rectangle +(1.4,6) [fill=white] ;
                    \node [fill=white,inner sep=6pt,shape=circle,draw] at (45:1cm) [label=below: $\epsilon_0$]{$0$};
                \end{scope}
                \begin{scope}[xshift=\i*2.5 cm,yshift={-2.5 cm}]
                    \node [fill=white,inner sep=6pt,shape=circle,draw] at (45:1cm) [label=below:$\epsilon_1$]{$1$};
                \end{scope}
                \begin{scope}[xshift=\i*2.5 cm,yshift={-4.5 cm}]
                    \node [fill=white,inner sep=6.5pt,shape=circle,draw] at (45:1cm) [label=below: $r$]{$*$};
                \end{scope}
                }
        \foreach \i in {0}{
				\begin{scope}[xshift=\i*2.5 cm,yshift={-0.5 cm}]
                    \draw (0:0) ++ (45:1) -- ++ (0:1.95cm);
                    \draw (0:0) ++ (45:1) -- ++ (90:1.45cm);
                    \draw (0:0) ++ (45:1) -- ++ (37:3.05cm);
                    \draw (0,-2.5) rectangle +(1.4,4) [fill=white] ;
                    \node [fill=white,inner sep=5.5pt,shape=circle,draw] at (45:1cm){};
                    \node [fill=white,inner sep=6pt,shape=circle,draw] at (45:1cm) [label=below: $\epsilon_0$] {$0$};
				\end{scope}
                \begin{scope}[xshift=\i*2.5 cm,yshift={-2.5 cm}]
                    \node [fill=white,inner sep=6pt,shape=circle,draw] at (45:1cm) [label=below: $1-\epsilon_0$] {$1$};
                \end{scope}
				}
        \foreach \i in {3}{
            \begin{scope}[xshift=\i*2.5 cm,yshift={-0.5 cm}]
                \draw (0:0) ++ (45:1) -- ++ (90:1.45cm);
            \end{scope}
            \begin{scope}[xshift=\i*2.5 cm,yshift={1.5 cm}]
                \draw (0:0) ++ (45:1) -- ++ (0:-1.95cm);
            \end{scope}
        }
        \foreach \i in {1,...,3}{
            
            \begin{scope}[xshift=\i*2.5 cm,yshift={-0.5 cm}]
            \draw (0,-4.5) rectangle +(1.4,6) [fill=white] ;
            \node [fill=white,inner sep=6pt,shape=circle,draw] at (45:1cm) [label=below: $\epsilon_0$]{$0$};
            \end{scope}
            \begin{scope}[xshift=\i*2.5 cm,yshift={-2.5 cm}]
            \node [fill=white,inner sep=6pt,shape=circle,draw] at (45:1cm) [label=below: $\epsilon_1$]{$1$};
            \end{scope}
            \begin{scope}[xshift=\i*2.5 cm,yshift={-4.5 cm}]
            \node [fill=white,inner sep=6pt,shape=circle,draw] at (45:1cm) [label=below: $r$]{$\text{\Qmark}$};
            \end{scope}
        }

        \begin{scope}[xshift=3*2.5 cm,yshift={1.5 cm}]
                \draw(0:0) ++ (45:1) ++ (0:-.5cm)  -- ++(0:.45cm);
                \draw(0:0) ++ (45:1) -- ++ (0:0.9cm);
        \end{scope}
        
        \begin{scope}[xshift=2*2.5 cm,yshift={1.5 cm}]
        \end{scope}
            \begin{scope}[xshift=-2*2.5 cm,yshift={1.5 cm}]
                    \draw(0:0) ++ (45:1) -- ++ (0:-0.95cm);
            \end{scope}
        \begin{scope}[xshift=1.25 cm, yshift={2.5 cm}]
            \node[] at (45:1cm) {$\Tilde{j}$};
        \end{scope}
        \foreach \i in {-2,-1}{
				\begin{scope}[xshift=\i*2.5 cm,yshift={1.5 cm}]
                    \draw(0:0) ++ (45:1) -- ++ (0:2cm);
                    \node [fill=white,inner sep=6pt,shape=circle,draw] at (45:1cm) {$*$};
				\end{scope}
				}
        \foreach \i in {0,1}{
				\begin{scope}[xshift=\i*2.5 cm,yshift={1.5 cm}]
                    \draw(0:0) ++ (45:1) -- ++ (0:1.95cm);
                    \node [fill=white,inner sep=6pt,shape=circle,draw] at (45:1cm) {$0$};
				\end{scope}
				}
        
            \foreach \i in {2,...,3}{
            \begin{scope}[xshift=\i*2.5 cm,yshift={1.5 cm}]
            \node [fill=white,inner sep=6pt,shape=circle,draw] at (45:1cm) {$\text{\Qmark}$};
            \end{scope}
            }
        
\end{tikzpicture}
    }
    \caption{Transition probabilities when the boundary is in state $(0,0)$ and $\Tilde{j}$ indicates the position of the boundary with the modification considered.}
    \label{fig:F2}
\end{figure}

\begin{remark}
In the first case, which corresponds to a transition to $f_{t+1} = (*,0)$, the value of $\Tilde{j}_{t+1}$ can be equal to $\Tilde{j}-3/2$ if the boundary is in state $(1,0)$ or to $\Tilde{j}-1$ if it is in state $(0,0)$. As we want to bound by below the increment, we keep the lower bound $\Tilde{j} -3/2$ in that case.
\end{remark}

\paragraph{Second step:}
As the second step depends only on the first step through the value of $f_{t+1}$, the transitions given in Equation~\eqref{eq:trans00} allow to deduce the mean increment for two time steps,
\begin{align}
& \esp{\Tilde{J}_{t}+ \Tilde{J}_{t+1}|f_t=(0,0)} \nonumber \\
& =\esp{\Tilde{J}_t|f_t=(0,0)} + \prob{f_{t+1}= (*,0) | f_t=(0,0)} \esp{\Tilde{J}_{t+1}|f_{t+1}=  (*,0)} \nonumber \\
& + \prob{f_{t+1}=(0,0)|f_t=(0,0)} \esp{\Tilde{J}_{t+1}|f_{t+1}=(0,0)} \nonumber \\
& + \prob{f_{t+1}\in \{(0,1),(1,0),(1,1),(*,1)\} |f_t=(0,0)} \esp{\Tilde{J}_{t+1}|f_{t+1} = (0,1)}. \label{eq:2pas}
\end{align}
The probability transitions of $f_{t+1}$ are given by Equation~\eqref{eq:trans00}:
\begin{align*}
& \prob{f_{t+1} = (*,0) |f_t=(0,0)} = \epsilon_0r^2,\\
& \prob{f_{t+1} = (0,0) |f_t=(0,0)} = 2\epsilon_0^2r+ \sum_{k=0}^\infty \epsilon_0^2 (1-r)^k r = 2\epsilon_0^2r + \epsilon_0^2, \text{ and, so, } \\
& \prob{f_{t+1}\in \{(0,1),(1,0),(1,1),(*,1)\} |f_t=(0,0)} = 1 - \epsilon_0 r^2 - 2 \epsilon_0^2 r - \epsilon_0^2.
\end{align*}
Moreover, by Equation~(\ref{eq:trans00}), the mean increment in one time step when the boundary is in state $(0,0)$ verify
\begin{align}
\esp{\Tilde{J}_{t} | f_{t} = (0,0) } \geq -\frac{1}{2} r^2 - 2 \epsilon_0 r+ \epsilon_0^2 r + \frac{1}{2} \epsilon_1^2 + \frac{(1-r)^2}{r} \label{eq:inf00}
\end{align}
and, when it is in state $(*,0)$ we have the following lower bound:
\begin{align}
\esp{\Tilde{J}_t | f_t = (*,0) } &\geq \min \left(\esp{\Tilde{J}_t|f_t=(0,0)} ,\esp{\Tilde{J}_t|f_t = (1,0)}\right) \nonumber\\
&\geq -\frac{1}{2} r^2 - 2 \epsilon_0 r+ \epsilon_0^2 r + \frac{1}{2} \epsilon_1^2 + \frac{(1-r)^2}{r}. \label{eq:inf*0}
\end{align}
Indeed,
\begin{align}
& \esp{\Tilde{J}_t|f_t = (1,0)} - \left(-\frac{1}{2} r^2 - 2 \epsilon_0 r + \epsilon_0^2 r + \frac{1}{2} \epsilon_1^2 + \frac{(1-r)^2}{r} \right) \nonumber \\
& = \frac{1}{2} (1-r)^2 + \frac{1}{2} \epsilon_1 r + \frac{1}{2} \epsilon_1^2 r + \frac{1}{2} \epsilon_0 (1-\epsilon_1)r + \epsilon_0 \left(\frac{3}{2}-\epsilon_0 \right)r \geq 0. \label{eq:diff10}
\end{align}
Hence, as detailed in Appendix~\ref{sec:proofn200}, we finally obtain that
\begin{equation} \label{eq:2stepn200}
\esp{\Tilde{J}_{t}+\Tilde{J}_{t+1} | f_t=(0,0)} > -1.
\end{equation}
Equations~\eqref{eq:1stepn21} and~\eqref{eq:2stepn200} imply Equation~\eqref{eq:goal} when $n=2$, and so Theorem~\ref{thm:main} for $n=2$.

\section{Proof of Theorem~\ref{thm:main} when $n=3$}\label{sec:proof3}
In this Section, we consider the PCA $\Tilde{H}_3$. The proof follows the same way as for $\Tilde{H}_2$. The main difference is that we now consider a boundary $f_t$ of size~$4$. Also, the definition of the modified boundary $\tilde{j}_t$ is slightly different.

\subsection{Modified boundary}\label{sec:mod3}

Let us consider a decorrelated island $(X_{i_t},\dots,X_{j_t})$. We now define the modified boundary position $\Tilde{j}_t$ by
\begin{displaymath}
\Tilde{j}_{t} = \begin{cases}
    j_t & \text{if } X_{j_t}=1, \\
    j_t-1 & \text{if } (X_{j_t-2},X_{j_t-1},X_{j_t})=(0,0,0) \text{ or } (X_{j_t-1},X_{j_t}) = (1,0),\\ 
    j_t-2 & \text{else, i.e.\ if } (X_{j_t-2},X_{j_t-1},X_{j_t})=(1,0,0).
\end{cases}
\end{displaymath}

\subsection{Transition probabilities}

For $n=3$, we consider a right boundary made of the states of the 4 rightmost cells of the island, $f_{t} = (X_{-3+j_t},X_{-2+j_t},X_{-1+j_t},X_{j_t})$, and 
in the following, we assume that $j_t-i_t \geq 9$. It is not a loss of generality to achieve the result, and it allows to avoid any problem of dependency between  $(\Tilde{j}_{t+1},f_t)$ and its left counterpart.

We group the possibles states of the boundary into three subsets of configurations (two being reduced to a singleton): $\{(0,0,0,0)\}$, $\{(1,0,0,0)\}$ and 
\begin{displaymath}
S_1 = \{(a_0,a_1,a_2,a_3) : a_0 \in \{0,1\} \text{ and } \exists k\in \{1,2,3\},\ a_k = 1\}.
\end{displaymath}
The $14$ configurations of this last set can be considered together, as their transition probabilities are the same. This is similar to the case $n=2$, when $(0,1)$, $(1,0)$ and $(1,1)$ were grouped.

\subsubsection{Case $f_t \in S_1$:}
For this case, we can reduce the boundary to a length equal to~$3$, as the mean increment in one time step is enough. Hence, in the transition, we do not care about the value of $X_{j_{t+1}-3}$, as illustrated on Figure~\ref{fig:transS1}. If $\Tilde{j}_t = \Tilde{j}$ and $f_t=f \in S_1$, then, we have 
\begin{align*}
    & \left(\Tilde{j}_{t+1},(X_{j_{t+1}-2},X_{j_{t+1}-1},X_{j_{t+1}}) \right) = \\
    & \begin{cases}
    (\Tilde{j}-1+k,(0,0,0)) & \text{w.p.\ } (1-\epsilon_1)^{3-k} \epsilon_0^k r \text{ for } k\in\{ 0,1,2\};\\
    (\Tilde{j}+2+k,(0,0,0)) & \text{w.p.\ } (\epsilon_0+\epsilon_1)^k \epsilon_0^3 r  \text{ for } k\in\NN;\\
    (\Tilde{j}-l+k,\{0,1\}^{2-l}\times\{1\}\times\{0\}^l) & \text{w.p.\ } \epsilon_1(1-\epsilon_1)^{l-k} \epsilon_0^k r \text{ for } l \in \{0,1,2\} \text{ and } 0\leq k\leq l;\\
    (\Tilde{j}-l+k,\{0,1\}^{2-l}\times\{1\}\times\{0\}^l) & \text{w.p.\ } (\epsilon_0+\epsilon_1)^{k-l-1} \epsilon_1 \epsilon_0^l r \text{ for } l \in \{0,1\} \text{ and } l+1\leq k\leq 2;\\
    (\Tilde{j}+3-l+k,\{0,1\}^{2-l}\times\{1\}\times\{0\}^l) & \text{w.p.\ } (\epsilon_0+\epsilon_1)^{k+2-l} \epsilon_1 \epsilon_0^l r  \text{ for } l\in \{0,1,2\} \text{ and } k\in\NN;
    \end{cases}
\end{align*}

\begin{figure}
    \centering
    \scalebox{0.7}{%
    \begin{tikzpicture}[scale= 0.8]
        
        \begin{scope}[xshift=2.5 cm, yshift={5.1 cm}]
            \node[] at (45:1cm) {$\Tilde{j}$};
        \end{scope}
            
        \begin{scope}[xshift=-2*2.5 cm,yshift={-0.5 cm}]
            \draw(0:0) ++ (45:1) -- ++ (0:-0.95cm);
        \end{scope}
         \begin{scope}[xshift=5 cm,yshift={-0.5 cm}]
            \draw (0:0) ++ (45:1) -- ++ (0:1.95cm);
            \draw (0:0) ++ (45:1) -- ++ (90:1.45cm);
            \draw (0:0) ++ (45:1) -- ++ (37:3.05cm);
            \draw (0:0) ++ (45:1) -- ++ (19:5cm);
        \end{scope}
        \begin{scope}[xshift=7.5 cm,yshift={-0.5 cm}]
            \draw (0:0) ++ (45:1) -- ++ (0:1.95cm);
            \draw (0:0) ++ (45:1) -- ++ (90:1.45cm);
            \draw (0:0) ++ (45:1) -- ++ (37:3.05cm);
            \draw (0:0) ++ (45:1) -- ++ (19:3.6cm);
        \end{scope}
        \begin{scope}[xshift=10 cm,yshift={-0.5 cm}]
            \draw(0:0) ++ (45:1) -- ++ (0:0.95cm);
            \draw (0:0) ++ (45:1) -- ++ (37:1.2cm);
            \draw (0:0) ++ (45:1) -- ++ (19:1cm);
        \end{scope}

        \foreach \i in {-2}{
			\begin{scope}[xshift=\i*2.5 cm,yshift={-0.5 cm}]
                \draw (0:0) ++ (45:1) -- ++ (0:1.95cm);
                \draw (0:0) ++ (45:1) -- ++ (90:1.45cm);
                \draw (0:0) ++ (45:1) -- ++ (37:3.05cm);
                \draw (0:0) ++ (45:1) -- ++ (19:5cm);
                \node [fill=white,inner sep=5.5pt,shape=circle,draw] at (45:1cm){};
                \node [fill=white,inner sep=6.5pt,shape=circle,draw] at (45:1cm)  {$*$};
			\end{scope}
		}
    
        \foreach \i in {-1,0,1}{
				\begin{scope}[xshift=\i*2.5 cm,yshift={-0.5 cm}]
                    \draw (0:0) ++ (45:1) -- ++ (0:1.95cm);
                    \draw (0:0) ++ (45:1) -- ++ (90:1.45cm);
                    \draw (0:0) ++ (45:1) -- ++ (37:3.05cm);
                    \draw (0:0) ++ (45:1) -- ++ (19:5cm);
                    \draw (0,-2.5) rectangle +(1.4,4) [fill=white] ;
                    \node [fill=white,inner sep=5.5pt,shape=circle,draw] at (45:1cm){};
                    \node [fill=white,inner sep=6pt,shape=circle,draw] at (45:1cm) [label=below: $1-\epsilon_1$] {$0$};
				\end{scope}
                \begin{scope}[xshift=\i*2.5 cm,yshift={-2.5 cm}]
                    \node [fill=white,inner sep=6pt,shape=circle,draw] at (45:1cm) [label=below: $\epsilon_1$] {$1$};
                \end{scope}
				}

        \foreach \i in {2,...,4}{
            
            \begin{scope}[xshift=\i*2.5 cm,yshift={-0.5 cm}]
            \draw (0,-4.5) rectangle +(1.4,6) [fill=white] ;
            \node [fill=white,inner sep=6pt,shape=circle,draw] at (45:1cm) [label=below: $\epsilon_0$]{$0$};
            \end{scope}
            \begin{scope}[xshift=\i*2.5 cm,yshift={-2.5 cm}]
            \node [fill=white,inner sep=6pt,shape=circle,draw] at (45:1cm) [label=below: $\epsilon_1$]{$1$};
            \end{scope}
            \begin{scope}[xshift=\i*2.5 cm,yshift={-4.5 cm}]
            \node [fill=white,inner sep=6pt,shape=circle,draw] at (45:1cm) [label=below: $r$]{$\text{\Qmark}$};
            \end{scope}
        }

        \begin{scope}[xshift=4*2.5 cm,yshift={1.5 cm}]
                \draw(0:0) ++ (45:1) ++ (0:-.5cm)  -- ++(0:.45cm);
                \draw(0:0) ++ (45:1) -- ++ (0:0.95cm);
        \end{scope}
        \begin{scope}[xshift=3*2.5 cm,yshift={1.5 cm}]
            \draw(0:0) ++ (45:1) -- ++ (0:1.95cm);
        \end{scope}
        \begin{scope}[xshift=-2*2.5 cm,yshift={1.5 cm}]
                \draw(0:0) ++ (45:1) -- ++ (0:-0.95cm);
        \end{scope}
        \foreach \i in {-2,...,0}{
				\begin{scope}[xshift=\i*2.5 cm,yshift={1.5 cm}]
                    \draw(0:0) ++ (45:1) -- ++ (0:1.95cm);
                    \node [fill=white,inner sep=4pt,shape=circle,draw] at (45:1cm) {$0|1$};
				\end{scope}
				}
        \foreach \i in {1}{
				\begin{scope}[xshift=\i*2.5 cm,yshift={1.5 cm}]
                    \draw(0:0) ++ (45:1) -- ++ (0:1.95cm);
                    \node [fill=white,inner sep=6pt,shape=circle,draw] at (45:1cm) {$1$};
				\end{scope}
				}
		\foreach \i in {2}{
          \begin{scope}[xshift=\i*2.5 cm,yshift={1.5 cm}]
            \draw(0:0) ++ (45:1) -- ++ (0:1.95cm);
          \end{scope}
        }	
        \foreach \i in {2,3}{
          \begin{scope}[xshift=\i*2.5 cm,yshift={1.5 cm}]
            \node [fill=white,inner sep=6pt,shape=circle,draw] at (45:1cm) {$\text{\Qmark}$};
          \end{scope}
        }
        \foreach \i in {4}{
          \begin{scope}[xshift=\i*2.5 cm,yshift={1.5 cm}]
            \node [fill=white,inner sep=6pt,shape=circle,draw] at (45:1cm) {$\text{\Qmark}$};
          \end{scope}
        }

        \begin{scope}[xshift=4*2.5 cm,yshift={2.8 cm}]
                \draw(0:0) ++ (45:1) ++ (0:-.5cm)  -- ++(0:.45cm);
                \draw(0:0) ++ (45:1) -- ++ (0:0.95cm);
        \end{scope}
        \begin{scope}[xshift=3*2.5 cm,yshift={2.8 cm}]
            \draw(0:0) ++ (45:1) -- ++ (0:1.95cm);
        \end{scope}
            \begin{scope}[xshift=-2*2.5 cm,yshift={2.8 cm}]
                    \draw(0:0) ++ (45:1) -- ++ (0:-0.95cm);
            \end{scope}
            \foreach \i in {-2}{
              \begin{scope}[xshift=\i*2.5 cm,yshift={2.8 cm}]
                \draw(0:0) ++ (45:1) -- ++ (0:1.95cm);
                \node [fill=white,inner sep=6pt,shape=circle,draw] at (45:1cm) {$*$};
              \end{scope}
            }

            \foreach \i in {-1,...,0}{
				\begin{scope}[xshift=\i*2.5 cm,yshift={2.8 cm}]
                    \draw(0:0) ++ (45:1) -- ++ (0:1.95cm);
                    \node [fill=white,inner sep=4pt,shape=circle,draw] at (45:1cm) {$0|1$};
				\end{scope}
				}
        \foreach \i in {1}{
				\begin{scope}[xshift=\i*2.5 cm,yshift={2.8 cm}]
                    \draw(0:0) ++ (45:1) -- ++ (0:1.95cm);
                    \node [fill=white,inner sep=6pt,shape=circle,draw] at (45:1cm) {$1$};
				\end{scope}
				}
        \foreach \i in {2}{
				\begin{scope}[xshift=\i*2.5 cm,yshift={2.8 cm}]
                    \draw(0:0) ++ (45:1) -- ++ (0:1.95cm);
                    \node [fill=white,inner sep=6pt,shape=circle,draw] at (45:1cm) {$0$};
				\end{scope}
                }
                
        \foreach \i in {3,...,4}{
            \begin{scope}[xshift=\i*2.5 cm,yshift={2.8 cm}]
            \node [fill=white,inner sep=6pt,shape=circle,draw] at (45:1cm) {$\text{\Qmark}$};
            \end{scope}
            }

        \begin{scope}[xshift=4*2.5 cm,yshift={4.1 cm}]
                \draw(0:0) ++ (45:1) ++ (0:-.5cm)  -- ++(0:.45cm);
                \draw(0:0) ++ (45:1) -- ++ (0:0.95cm);
        \end{scope}
        \begin{scope}[xshift=3*2.5 cm,yshift={4.1 cm}]
            \draw(0:0) ++ (45:1) -- ++ (0:1.95cm);
        \end{scope}
            \begin{scope}[xshift=-2*2.5 cm,yshift={4.1 cm}]
                    \draw(0:0) ++ (45:1) -- ++ (0:-0.95cm);
            \end{scope}
            \foreach \i in {-2,-1}{
              \begin{scope}[xshift=\i*2.5 cm,yshift={4.1 cm}]
                \draw(0:0) ++ (45:1) -- ++ (0:2cm);
                \node [fill=white,inner sep=6pt,shape=circle,draw] at (45:1cm) {$*$};
              \end{scope}
            }

            \foreach \i in {0}{
				\begin{scope}[xshift=\i*2.5 cm,yshift={4.1 cm}]
                    \draw(0:0) ++ (45:1) -- ++ (0:1.95cm);
                    \node [fill=white,inner sep=4pt,shape=circle,draw] at (45:1cm) {$0|1$};
				\end{scope}
				}
        \foreach \i in {1}{
				\begin{scope}[xshift=\i*2.5 cm,yshift={4.1 cm}]
                    \draw(0:0) ++ (45:1) -- ++ (0:1.95cm);
                    \node [fill=white,inner sep=6pt,shape=circle,draw] at (45:1cm) {$1$};
				\end{scope}
				}
		\foreach \i in {2}{
				\begin{scope}[xshift=\i*2.5 cm,yshift={4.1 cm}]
                    \draw(0:0) ++ (45:1) -- ++ (0:1.95cm);
				\end{scope}
                }
                
        \foreach \i in {2,3}{
				\begin{scope}[xshift=\i*2.5 cm,yshift={4.1 cm}]
                    \node [fill=white,inner sep=6pt,shape=circle,draw] at (45:1cm) {$0$};
				\end{scope}
                }
                
        \foreach \i in {4}{
            \begin{scope}[xshift=\i*2.5 cm,yshift={4.1 cm}]
            \node [fill=white,inner sep=6pt,shape=circle,draw] at (45:1cm) {$\text{\Qmark}$};
            \end{scope}
            }
            
\end{tikzpicture}
    }
    \caption{Transition probabilities of the different cells when the state $f$ of the boundary belongs to $S_1$. Precisely, the first line corresponds to $f \in \{(0,1,0,0),(1,1,0,0\}$, the second line to $f \in \{0,1\}^2 \times \{1\} \times \{0\}$, and the third line to $f \in \{0,1\}^3 \times \{1\}$. The boundary position $j$ depends on the case, but the modified boundary $\tilde j$ is always the same.}
    \label{fig:transS1}
    
    \bigskip
\end{figure}
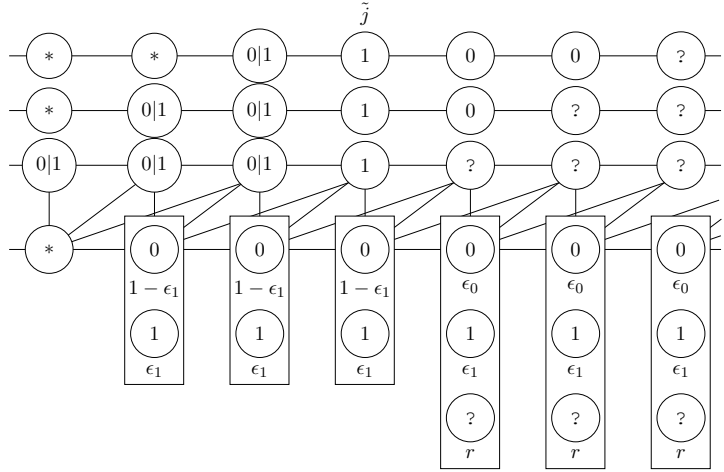

We then obtain the following mean increment:
\begin{align}
& \esp{\Tilde{J_t}|f_t\in S_1} = \sum_{k=0}^2 (-1+k) (1-\epsilon_1)^{3-k} \epsilon_0^k r + \dots \nonumber \\
& =-1+\epsilon_0+2\epsilon_1+\epsilon_0^2+3\epsilon_1^2 ( 1 -\epsilon_1) +\epsilon_0\epsilon_1(1-\epsilon_0)+\epsilon_1^4+\epsilon_0^3\epsilon_1+\frac{(1-r)^{3}}{r} > -1. \label{eq:driftS1} 
\end{align}
This can be checked by hand or with the help of a computer algebra system (see Appendix 7). Hence, we proved Equation~\eqref{eq:goal} for any $f \in S_1$. For the two others cases ($f = (0,0,0,0)$ or $f=(1,0,0,0)$), we have to consider two time steps. The case $f_t = (1,0,0,0)$ is considered first.

\subsubsection{Case $f_t = (1,0,0,0)$:}

As illustrated on Figure~\ref{fig:trans1000}, if $\Tilde{j}_t = \Tilde{j}$ and $f_t = (1,0,0,0)$, then
\begin{align}
    & (\Tilde{j}_{t+1},f_{t+1}) = \nonumber \\ 
    & \begin{cases}
    (\Tilde{j}-2+k,(0,0,0,0)) & \text{w.p.\ }(1-\epsilon_1)^{3-k} \epsilon_0^{1+k} r \text{ for } k\in\{0,1,2,3\};\\
    (\Tilde{j}-2+k,(1,0,0,0)) & \text{w.p.\ } \epsilon_1(1-\epsilon_1)^{2-k} \epsilon_0^{1+k} r \text{ for } k\in\{0,1,2\};\\
    (\Tilde{j}+1,(1,0,0,0)) & \text{w.p.\ } (1-\epsilon_0)\epsilon_0^3r;\\
    (\Tilde{j}-1+k-l,\{0,1\}^{3-l}\times\{1\}\times\{0\}^{l}) & \text{w.p.\ } \epsilon_0^{k+1}\epsilon_1(1-\epsilon_1)^{l-(k+1)}r \text{ for } l \in \{1,2\} \text{ and } 0\leq k\leq l-1;\\
    (\Tilde{j}-1,\{0,1\}^{3-l}\times\{1\}\times\{0\}^{l}) & \text{w.p.\ } (1-\epsilon_0) \epsilon_0^{l}r \text{ for } l \in \{0,1,2\};\\
    (\Tilde{j}-1+k-l,\{0,1\}^{3-l}\times\{1\}\times\{0\}^{l}) & \text{w.p.\ } (\epsilon_0+\epsilon_1)^{k-(l+1)} \epsilon_1 \epsilon_0^l r \text{ for } l \in \{0,1,2\} \text{ and } l+1\leq k \leq 3;\\        
    (\Tilde{j}+2+k,(0,0,0,0)) & \text{w.p.\ } (\epsilon_0+\epsilon_1)^k \epsilon_0^4 r  \text{ for } k\in\NN;\\
    (\Tilde{j}+2+k,(1,0,0,0)) & \text{w.p.\ } (\epsilon_0+\epsilon_1)^k \epsilon_1 \epsilon_0^3 r  \text{ for } k\in\NN;\\
    (\Tilde{j}+3+k-l,\{0,1\}^{3-l}\times\{1\}\times\{0\}^{l}) & \text{w.p.\ } (\epsilon_0+\epsilon_1)^{3+k-l}\epsilon_1 \epsilon_0^l r  \text{ for } l \in \{0,1,2\} \text{ and } k\in\NN;
    \end{cases} \label{eq:trans1000}
\end{align}

We obtain then the following formula for the mean increment in two time steps when $f_t = (1,0,0,0)$:
\begin{align}
    & \esp{\Tilde{J}_{t}+\Tilde{J}_{t+1}|f_t=(1,0,0,0)} \nonumber \\
    & = \esp{\Tilde{J}_{t}|f_t=(1,0,0,0)} + \prob{f_{t+1} \in S_1 |f_t=(1,0,0,0)} \esp{\Tilde{J}_{t+1}|f_{t+1}\in S_1} \nonumber \\
    & + \prob{f_{t+1}=(1,0,0,0) |f_t=(1,0,0,0)} \esp{\Tilde{J}_{t+1}|f_{t+1}=(1,0,0,0)} \nonumber \\
    & + \prob{f_{t+1}=(0,0,0,0) |f_t=(1,0,0,0)} \esp{\Tilde{J}_{t+1}|f_{t+1}=(0,0,0,0)} \label{eq:driftAux1000}
\end{align}

From Equations~\eqref{eq:driftS1} and~\eqref{eq:trans1000}, we have computed or we can compute all the previous terms except $\esp{\Tilde{J}_{t+1}|f_{t+1}=(0,0,0,0)}$. In particular, with the help of a computer algebra system (see Appendix 7), one finds 
\begin{equation}
\esp{\Tilde{J}_{t}|f_{t}=(1,0,0,0)} = -1-\epsilon_0+\epsilon_1+\epsilon_0^2+\epsilon_1^2+2\epsilon_0\epsilon_1+2\epsilon_0^3+2\epsilon_0\epsilon_1^2-\epsilon_0^4-\epsilon_0\epsilon_1^3+\frac{(1-r)^{3}}{r}. \label{eq:1step1000}
\end{equation}

\begin{figure}
    \centering

    \scalebox{0.7}{%
    \begin{tikzpicture}[scale= 0.8]
        
        \begin{scope}[xshift=2.5 cm, yshift={2.5 cm}]
            \node[] at (45:1cm) {$\Tilde{j}$};
        \end{scope}
        \begin{scope}[xshift=-3*2.5 cm,yshift={-0.5 cm}]
            \draw(0:0) ++ (45:1) -- ++ (0:-0.95cm);
        \end{scope}
        \begin{scope}[xshift=2.5 cm,yshift={-0.5 cm}]
            \draw (0:0) ++ (45:1) -- ++ (0:1.95cm);
            \draw (0:0) ++ (45:1) -- ++ (90:1.45cm);
            \draw (0:0) ++ (45:1) -- ++ (37:3.05cm);
            \draw (0:0) ++ (45:1) -- ++ (19:5cm);
        \end{scope}
        \begin{scope}[xshift=5 cm,yshift={-0.5 cm}]
            \draw (0:0) ++ (45:1) -- ++ (0:1.95cm);
            \draw (0:0) ++ (45:1) -- ++ (90:1.45cm);
            \draw (0:0) ++ (45:1) -- ++ (37:3.05cm);
            \draw (0:0) ++ (45:1) -- ++ (19:5cm);
        \end{scope}
        \begin{scope}[xshift=7.5 cm,yshift={-0.5 cm}]
            \draw (0:0) ++ (45:1) -- ++ (0:1.95cm);%
            \draw (0:0) ++ (45:1) -- ++ (90:1.45cm);
            \draw (0:0) ++ (45:1) -- ++ (37:3.05cm);
            \draw (0:0) ++ (45:1) -- ++ (19:3.6cm);
        \end{scope}
        \begin{scope}[xshift=10 cm,yshift={-0.5 cm}]
            \draw(0:0) ++ (45:1) -- ++ (0:0.95cm);
            \draw (0:0) ++ (45:1) -- ++ (37:1.3cm);
            \draw (0:0) ++ (45:1) -- ++ (19:1cm);
        \end{scope}

        \foreach \i in {-3,-2,-1}{
				\begin{scope}[xshift=\i*2.5 cm,yshift={-0.5 cm}]
                    \draw (0:0) ++ (45:1) -- ++ (0:1.95cm);
                    \draw (0:0) ++ (45:1) -- ++ (90:1.45cm);
                    \draw (0:0) ++ (45:1) -- ++ (37:3.05cm);
                    \draw (0:0) ++ (45:1) -- ++ (19:5cm);
                    \draw (0,-2.5) rectangle +(1.4,4) [fill=white] ;
                    \node [fill=white,inner sep=6pt,shape=circle,draw] at (45:1cm) [label=below: $1-\epsilon_1$] {$0$};
                \end{scope}
                \begin{scope}[xshift=\i*2.5 cm,yshift={-2.5 cm}]
                    \node [fill=white,inner sep=6pt,shape=circle,draw] at (45:1cm) [label=below: $\epsilon_1$] {$1$};
				\end{scope}
                }
        \foreach \i in {0}{
				\begin{scope}[xshift=\i*2.5 cm,yshift={-0.5 cm}]
                    \draw (0:0) ++ (45:1) -- ++ (0:1.95cm);
                    \draw (0:0) ++ (45:1) -- ++ (90:1.45cm);
                    \draw (0:0) ++ (45:1) -- ++ (37:3.05cm);
                    \draw (0:0) ++ (45:1) -- ++ (19:5cm);
                    \draw (0,-2.5) rectangle +(1.4,4) [fill=white] ;
                    \node [fill=white,inner sep=5.5pt,shape=circle,draw] at (45:1cm){};
                    \node [fill=white,inner sep=6pt,shape=circle,draw] at (45:1cm) [label=below: $\epsilon_0$] {$0$};
				\end{scope}
                \begin{scope}[xshift=\i*2.5 cm,yshift={-2.5 cm}]
                    \node [fill=white,inner sep=6pt,shape=circle,draw] at (45:1cm) [label=below: $1-\epsilon_0$] {$1$};
                \end{scope}
				}
        \foreach \i in {4}{
            \begin{scope}[xshift=\i*2.5 cm,yshift={-0.5 cm}]
            \draw (0:0) ++ (45:1) -- ++ (90:1.45cm);
            \end{scope}
        }
        \foreach \i in {1,...,4}{
            
            \begin{scope}[xshift=\i*2.5 cm,yshift={-0.5 cm}]
            \draw (0,-4.5) rectangle +(1.4,6) [fill=white] ;
            \node [fill=white,inner sep=6pt,shape=circle,draw] at (45:1cm) [label=below: $\epsilon_0$]{$0$};
            \end{scope}
            \begin{scope}[xshift=\i*2.5 cm,yshift={-2.5 cm}]
            \node [fill=white,inner sep=6pt,shape=circle,draw] at (45:1cm) [label=below: $\epsilon_1$]{$1$};
            \end{scope}
            \begin{scope}[xshift=\i*2.5 cm,yshift={-4.5 cm}]
            \node [fill=white,inner sep=6pt,shape=circle,draw] at (45:1cm) [label=below: $r$]{$\text{\Qmark}$};
            \end{scope}
        }

        \begin{scope}[xshift=4*2.5 cm,yshift={1.5 cm}]
                \draw(0:0) ++ (45:1) ++ (0:-.5cm)  -- ++(0:.45cm);
                \draw(0:0) ++ (45:1) -- ++ (0:0.95cm);
        \end{scope}
        \begin{scope}[xshift=3*2.5 cm,yshift={1.5 cm}]
            \draw(0:0) ++ (45:1) -- ++ (0:1.95cm);
        \end{scope}
        \begin{scope}[xshift=-3*2.5 cm,yshift={1.5 cm}]
            \draw(0:0) ++ (45:1) -- ++ (0:-0.95cm);
        \end{scope}
        \foreach \i in {-3,...,-2}{
				\begin{scope}[xshift=\i*2.5 cm,yshift={1.5 cm}]
                    \draw(0:0) ++ (45:1) -- ++ (0:2cm);
                    \node [fill=white,inner sep=6pt,shape=circle,draw] at (45:1cm) {$*$};
				\end{scope}
				}
        \foreach \i in {-1}{
				\begin{scope}[xshift=\i*2.5 cm,yshift={1.5 cm}]
                    \draw(0:0) ++ (45:1) -- ++ (0:1.95cm);
                    \node [fill=white,inner sep=6pt,shape=circle,draw] at (45:1cm) {$1$};
				\end{scope}
				}
        \foreach \i in {0,...,2}{
				\begin{scope}[xshift=\i*2.5 cm,yshift={1.5 cm}]
                    \draw(0:0) ++ (45:1) -- ++ (0:1.95cm);
                    \node [fill=white,inner sep=6pt,shape=circle,draw] at (45:1cm) {$0$};
				\end{scope}
				}
        
            \foreach \i in {3,...,4}{
            \begin{scope}[xshift=\i*2.5 cm,yshift={1.5 cm}]
            \node [fill=white,inner sep=6pt,shape=circle,draw] at (45:1cm) {$\text{\Qmark}$};
            \end{scope}
            }
        \end{tikzpicture}}
    \caption{Transition probabilities when the boundary is in state $(1,0,0,0)$ and $\Tilde{j}$ the position of the modified boundary.} \label{fig:trans1000}

\end{figure}

For the missing term, we have to look at the transitions when $f_t = (0,0,0,0)$.

\subsubsection{Case $f_t = (0,0,0,0)$:}

If $\tilde{j}_t = j$ and $f_t = (0,0,0,0)$, then
\begin{align}
    & (\Tilde{j}_{t+1},f_{t+1}) = \nonumber\\
    & \begin{cases}
    (\geq \Tilde{j}-3,(1,*,0,0)) & \text{w.p.\ } \epsilon_1 r \epsilon_0^2 r;\\
    (\geq \Tilde{j}-3,\{0,*\}\times\{*\}\times\{0\}^2) & \text{w.p.\ } (1-\epsilon_1) r \epsilon_0^2 r;\\
    (\Tilde{j}-2+k,(*,0,0,0)) & \text{w.p.\ } r \epsilon_0^3 r \text{ for } k\in\{0,1\};\\
    (\Tilde{j}-3,\{0,1,*\}\times\{1\}\times\{0\}^2) & \text{w.p.\ }     \epsilon_1 \epsilon_0^2 r;\\
    (\Tilde{j}-2,\{0,1,*\}\times\{1\}\times\{0\}^2) & \text{w.p.\ }     (1-\epsilon_0) \epsilon_0^2 r;\\
    (\Tilde{j}-2+k,(0,0,0,0)) & \text{w.p.\ } \epsilon_0^4r \text{ for } k\in\{0,1,2,3\};\\
    (\Tilde{j}-2+k,(1,0,0,0)) & \text{w.p.\ } \epsilon_1 \epsilon_0^3 r \text{ for } k\in\{0,1\};\\
    (\Tilde{j}-2+k,(1,0,0,0)) & \text{w.p.\ }      (1-\epsilon_0) \epsilon_0^3 r \text{ for } k \in\{2,3\};\\
    (\Tilde{j}-1,\{0,1,*\}^2\times\{0,1\}\times\{1\}) & \text{w.p.\ } (1-\epsilon_0)r;\\
    (\Tilde{j},\{0,1,*\}\times\{0,1\}^2\times\{1\}) & \text{w.p.\ } \epsilon_1r;\\
    (\Tilde{j}-2,\{0,1,*\}^2\times\{1\}\times\{0\}) & \text{w.p.\ } (1-\epsilon_0) \epsilon_0 r;\\
    (\Tilde{j}-1,\{0,1,*\}\times\{0,1\}\times\{1\}\times\{0\}) & \text{w.p.\ } (1-\epsilon_0) \epsilon_0 r;\\
    (\Tilde{j}+1+k,\{0,1\}^3\times\{1\}) & \text{w.p.\ } (\epsilon_0+\epsilon_1)^{k+1} \epsilon_1 r \text{ for } k\in\{0,1\};\\
    (\Tilde{j}+k,\{0,1\}^2\times\{1\}\times\{0\}) & \text{w.p.\ } (\epsilon_0+\epsilon_1)^{k} \epsilon_1 \epsilon_0 r \text{ for } k\in\{0,1\};\\
    (\Tilde{j}-1,\{0,1\}\times\{1\}\times\{0\}^2) & \text{w.p.\ } (1-\epsilon_0) \epsilon_0^2 r\\
    (\Tilde{j},\{0,1\}\times\{1\}\times\{0\}^2) & \text{w.p.\ } \epsilon_1 \epsilon_0^2 r;\\
    (\Tilde{j}+2+k,(0,0,0,0)) & \text{w.p.\ } (\epsilon_0+\epsilon_1)^k \epsilon_0^4 r  \text{ for } k\in\NN;\\
    (\Tilde{j}+2+k,(1,0,0,0)) & \text{w.p.\ } (\epsilon_0+\epsilon_1)^k \epsilon_1 \epsilon_0^3 r  \text{ for } k\in\NN;\\
    (\Tilde{j}+3-l+k,\{0,1\}^{4-l}\times\{1\}\times\{0\}^{l}) & \text{w.p.\ } (\epsilon_0+\epsilon_1)^{k+3-l} \epsilon_1 \epsilon_0^{l} r \text{ for } l \in \{0,1,2\} \text{ and } k\in\NN.
    \end{cases} \label{eq:trans0000}
\end{align}

\begin{figure}
    \centering
    \scalebox{0.7}{%
    \begin{tikzpicture}[scale= 0.8]

        \begin{scope}[xshift=-3*2.5 cm,yshift={-0.5 cm}]
            \draw(0:0) ++ (45:1) -- ++ (0:-0.95cm);
        \end{scope}
        \begin{scope}[xshift=2.5 cm,yshift={-0.5 cm}]
            \draw (0:0) ++ (45:1) -- ++ (0:1.95cm);
            \draw (0:0) ++ (45:1) -- ++ (90:1.45cm);
            \draw (0:0) ++ (45:1) -- ++ (37:3.05cm);
            \draw (0:0) ++ (45:1) -- ++ (19:5cm);
        \end{scope}
        \begin{scope}[xshift=5 cm,yshift={-0.5 cm}]
            \draw (0:0) ++ (45:1) -- ++ (0:1.95cm);
            \draw (0:0) ++ (45:1) -- ++ (90:1.45cm);
            \draw (0:0) ++ (45:1) -- ++ (37:3.05cm);
            \draw (0:0) ++ (45:1) -- ++ (19:5cm);
        \end{scope}
        \begin{scope}[xshift=7.5 cm,yshift={-0.5 cm}]
            \draw (0:0) ++ (45:1) -- ++ (0:1.95cm);
            \draw (0:0) ++ (45:1) -- ++ (90:1.45cm);
            \draw (0:0) ++ (45:1) -- ++ (37:3.05cm);
            \draw (0:0) ++ (45:1) -- ++ (19:3.6cm);
        \end{scope}
        \begin{scope}[xshift=10 cm,yshift={-0.5 cm}]
            \draw(0:0) ++ (45:1) -- ++ (0:0.95cm);
            \draw (0:0) ++ (45:1) -- ++ (37:1.3cm);
            \draw (0:0) ++ (45:1) -- ++ (19:1cm);
        \end{scope}

        \foreach \i in {-3,-2}{
				\begin{scope}[xshift=\i*2.5 cm,yshift={-0.5 cm}]
                    \draw (0:0) ++ (45:1) -- ++ (0:1.95cm);
                    \draw (0:0) ++ (45:1) -- ++ (90:1.45cm);
                    \draw (0:0) ++ (45:1) -- ++ (37:3.05cm);
                    \draw (0:0) ++ (45:1) -- ++ (19:5cm);
                    \node [fill=white,inner sep=6pt,shape=circle,draw] at (45:1cm) [label=below: $1-\epsilon_1$] {$0$};
                \end{scope}
                \begin{scope}[xshift=\i*2.5 cm,yshift={-0.5 cm}]
                    \draw (0,-4.5) rectangle +(1.4,6) [fill=white] ;
                    \node [fill=white,inner sep=6pt,shape=circle,draw] at (45:1cm) [label=below: $\epsilon_0$]{$0$};
                \end{scope}
                \begin{scope}[xshift=\i*2.5 cm,yshift={-2.5 cm}]
                    \node [fill=white,inner sep=6pt,shape=circle,draw] at (45:1cm) [label=below:$\epsilon_1$]{$1$};
                \end{scope}
                \begin{scope}[xshift=\i*2.5 cm,yshift={-4.5 cm}]
                    \node [fill=white,inner sep=6.5pt,shape=circle,draw] at (45:1cm) [label=below: $r$]{$*$};
                \end{scope}
                }
        \foreach \i in {-1,0}{
				\begin{scope}[xshift=\i*2.5 cm,yshift={-0.5 cm}]
                    \draw (0:0) ++ (45:1) -- ++ (0:1.95cm);
                    \draw (0:0) ++ (45:1) -- ++ (90:1.45cm);
                    \draw (0:0) ++ (45:1) -- ++ (37:3.05cm);
                    \draw (0:0) ++ (45:1) -- ++ (19:5cm);
                    \draw (0,-2.5) rectangle +(1.4,4) [fill=white] ;
                    \node [fill=white,inner sep=5.5pt,shape=circle,draw] at (45:1cm){};
                    \node [fill=white,inner sep=6pt,shape=circle,draw] at (45:1cm) [label=below: $\epsilon_0$] {$0$};
				\end{scope}
                \begin{scope}[xshift=\i*2.5 cm,yshift={-2.5 cm}]
                    \node [fill=white,inner sep=6pt,shape=circle,draw] at (45:1cm) [label=below: $1-\epsilon_0$] {$1$};
                \end{scope}
				}
        \foreach \i in {4}{
            \begin{scope}[xshift=\i*2.5 cm,yshift={-0.5 cm}]
            \draw (0:0) ++ (45:1) -- ++ (90:1.45cm);
            \end{scope}
        }
        \foreach \i in {1,...,4}{
            
            \begin{scope}[xshift=\i*2.5 cm,yshift={-0.5 cm}]
            \draw (0,-4.5) rectangle +(1.4,6) [fill=white] ;
            \node [fill=white,inner sep=6pt,shape=circle,draw] at (45:1cm) [label=below: $\epsilon_0$]{$0$};
            \end{scope}
            \begin{scope}[xshift=\i*2.5 cm,yshift={-2.5 cm}]
            \node [fill=white,inner sep=6pt,shape=circle,draw] at (45:1cm) [label=below: $\epsilon_1$]{$1$};
            \end{scope}
            \begin{scope}[xshift=\i*2.5 cm,yshift={-4.5 cm}]
            \node [fill=white,inner sep=6pt,shape=circle,draw] at (45:1cm) [label=below: $r$]{$\text{\Qmark}$};
            \end{scope}
        }

        \begin{scope}[xshift=4*2.5 cm,yshift={1.5 cm}]
                \draw(0:0) ++ (45:1) ++ (0:-.5cm)  -- ++(0:.45cm);
                \draw(0:0) ++ (45:1) -- ++ (0:0.95cm);
        \end{scope}
        \begin{scope}[xshift=3*2.5 cm,yshift={1.5 cm}]
            \draw(0:0) ++ (45:1) -- ++ (0:1.95cm);
        \end{scope}
            \begin{scope}[xshift=-3*2.5 cm,yshift={1.5 cm}]
                    \draw(0:0) ++ (45:1) -- ++ (0:-0.95cm);
            \end{scope}
        \foreach \i in {-3,...,-2}{
				\begin{scope}[xshift=\i*2.5 cm,yshift={1.5 cm}]
                    \draw(0:0) ++ (45:1) -- ++ (0:2cm);
                    \node [fill=white,inner sep=6pt,shape=circle,draw] at (45:1cm) {$*$};
				\end{scope}
				}
        \foreach \i in {-1,...,2}{
				\begin{scope}[xshift=\i*2.5 cm,yshift={1.5 cm}]
                    \draw(0:0) ++ (45:1) -- ++ (0:2cm);
                    \node [fill=white,inner sep=6pt,shape=circle,draw] at (45:1cm) {$0$};
				\end{scope}
				}
        
            \foreach \i in {3,...,4}{
            \begin{scope}[xshift=\i*2.5 cm,yshift={1.5 cm}]
            \node [fill=white,inner sep=6pt,shape=circle,draw] at (45:1cm) {$\text{\Qmark}$};
            \end{scope}
            }
        
\end{tikzpicture}}
    \caption{Transition probabilities when the boundary is in state $(0,0,0,0)$ and $\Tilde{j}$ the position of the modified boundary.} \label{fig:trans0000}
\end{figure}

This allows to find, with the help of a computer algebra system (see Appendix 7), the following lower bound, denoted by $I_0$, for $\esp{\Tilde{J}_t | f_t=(0,0,0,0)}$:
\begin{align}
&\esp{\Tilde{J}_t|f_t=(0,0,0,0)} \nonumber \\
&\geq -1-\epsilon_0+\epsilon_1-\epsilon_0^2+\epsilon_1^2+3\epsilon_0\epsilon_1+6\epsilon_0^3 +2\epsilon_0^2\epsilon_1-3\epsilon_0^{4}-3\epsilon_0^3\epsilon_1+\frac{(1-r)^{3}}{r} = I_0. \label{eq:I0}
\end{align}
Unfortunetaly, the lower bound $I_0$ is not greater than $-1$ for any $\epsilon_0$ and $\epsilon_1$. 

\subsubsection{Back to the lower bound of  $\esp{\Tilde{J}_t + \Tilde{J}_{t+1} | f_t = (1,0,0,0)}$.}
By putting the lower bound $I_0$ in the Equation~\eqref{eq:driftAux1000}, see Appendix~\ref{sec:lb2n31000} for details, we find that
\begin{equation} \label{eq:-21000}
\esp{\Tilde{J}_t + \Tilde{J}_{t+1} | f_t = (1,0,0,0)} > -2.
\end{equation}
To conclude the proof,  we now prove that the same inequality holds for $f_t = (0,0,0,0)$.

\subsubsection{Lower bound of $\esp{\Tilde{J}_t + \Tilde{J}_{t+1} | f_t = (0,0,0,0)}$.}
Using Equation~\eqref{eq:trans0000}, we obtain
\begin{align*}
& \esp{\Tilde{J}_{t} + \Tilde{J}_{t+1} |f_t=(0,0,0,0)}\\
&= \esp{\Tilde{J}_{t}|f_t=(0,0,0,0)} + \epsilon_1 \epsilon_0^2 r^2 \esp{\Tilde{J}_{t+1}|f_{t+1}=(1,*,0,0)}\\
&+ \dots + \left(\sum_{l=0}^2 \sum_{k=0}^\infty  (1-r)^{k+3-l} \epsilon_0^{l} \epsilon_1 \epsilon_0^{l} r \right) \esp{\Tilde{J}_{t+1} | f_{t+1} \in S_1}. 
\end{align*}
Finally, there are three terms that are not known and that we have to bound by below. For shortness, we denote by $E^*$ the set $\{0,*\}\times\{*\}\times\{0\}^2$. By considering all the possible types of boundary in each subset, similarly as in Equation~\eqref{eq:inf*0}, we obtain
\begin{align*}
&\esp{\Tilde{J_t}|f_t=(*,0,0,0)}\geq \min \left( \esp{\Tilde{J_t}|f_t=(0,0,0,0)} ,\esp{\Tilde{J_t}|f_t=(1,0,0,0)}\right);\\
&\esp{\Tilde{J_t}|f_t=(1,*,0,0)}\geq \min \left( \esp{\Tilde{J_t}|f_t=(1,0,0,0)},\esp{\Tilde{J_t}|f_t\in S_1} \right);\\
&\esp{\Tilde{J_t}|f_t\in E^*}\geq \min \left( \esp{\Tilde{J_t}|f_t=(0,0,0,0)},\esp{\Tilde{J_t}|f_t=(1,0,0,0)},\esp{\Tilde{J_t}|f_t\in S_1} \right).
\end{align*}
For any $(\epsilon_0,\epsilon_1)\in[0,\frac{1}{2}]^2$, we have, see Appendices~\ref{sec:cn3ft0} and~\ref{sec:cn3ft1}, the two following inequalities:
\begin{displaymath}
    \esp{\Tilde{J_t}|f_t\in S_1} \geq I_0 \text{ and } \esp{\Tilde{J_t}|f_t\in S_1} \geq \esp{\Tilde{J_t}|f_t=(1,0,0,0)}.
\end{displaymath}
Hence,
\begin{align*}
    &\esp{\Tilde{J_t}|f_t=(*,0,0,0)}\geq\min \left( I_0 ,\esp{\Tilde{J_t}|f_t=(1,0,0,0)}\right);\\
    &\esp{\Tilde{J_t}|f_t= (1,*,0,0)} \geq \esp{\Tilde{J_t}|f_t=(1,0,0,0)};\\
    &\esp{\Tilde{J_t}|f_t\in E^*}\geq \min \left( I_0,\esp{\Tilde{J_t}|f_t=(1,0,0,0)}\right).
\end{align*}
Finally, we get
\begin{align}
& \esp{\Tilde{J}_{t}+\Tilde{J}_{t+1} | f_t=(0,0,0,0)}\nonumber \\
& \geq I_0 \nonumber + \prob{f_{t+1}=(0,0,0,0) |f_t=(0,0,0,0)} I_0 \nonumber \\
& + \prob{f_{t+1} \in S_1 | f_t = (0,0,0,0)} \esp{\Tilde{J}_{t+1}| f_{t+1} \in S_1} \nonumber \\
& + \prob{f_{t+1} \in \{(1,0,0,0),(1,*,0,0)\} |f_t=(0,0,0,0)} \esp{\Tilde{J}_{t+1}|f_{t+1}=(1,0,0,0)} \nonumber\\
& + \prob{f_{t+1}=(*,0,0,0) \cup E^* |f_t=(0,0,0,0)} \min \left( I_0, \esp{\Tilde{J}_{t+1}|f_{t+1}=(1,0,0,0)} \right) \label{eq:lb0000}
\end{align}

The value of $\min \left( I_0, \esp{\Tilde{J}_{t+1}|f_{t+1}=(1,0,0,0)} \right)$ can be the left or the right term, depending on the values of $\epsilon_0$ and $\epsilon_1$. Nevertheless, in Appendix~\ref{sec:lb2n30000}, we prove that in both cases it is greater than $-2$, and so
\begin{equation}
    \esp{\Tilde{J}_{t}+\Tilde{J}_{t+1} | f_t=(0,0,0,0)} > -2.
\end{equation}
Combined with Equations~\eqref{eq:driftS1} and~\eqref{eq:-21000}, it implies Equation~\eqref{eq:goal} when $n=3$, and so Theorem~\ref{thm:main} for $n=3$.

\section{Perspectives}
We have proved the desired result for $n=2$ and $n=3$, by examining the behaviour of the boundaries of decorrelated islands. The ergodicity of the hard-core PCA for $n\geq 4$ is still an open question. Our method could be used to handle the case of larger values of $n$, but the computations are becoming increasingly complex with the size of the neighbourhood, so that the $n=4$ case is already difficult to tackle. However, one can ask whether there could be an automated way of handling larger neighbourhoods, and perhaps also other classes of PCA.

Concerning specifically the hard-core PCA, it seems that for fixed $\epsilon_0$, the mean increment in 1 (and 2) time step(s) is an increasing function in $\epsilon_1$. If this is indeed the case, it could simplify the analysis.

\bibliographystyle{alpha}
\newcommand{\etalchar}[1]{$^{#1}$}

\newpage

\section{Proofs of the lower bounds of the drifts} \label{sec:Appendix}
\subsection{Lower bound of $\esp{\Tilde{J}_{t} + \Tilde{J}_{t+1} | f_t = (0,0)}$ for $n=2$} \label{sec:proofn200}
By Equation~\eqref{eq:2pas}, Inequalities~\eqref{eq:inf00}~and~\eqref{eq:inf*0}, and Equality~\eqref{eq:1stepn21}, we obtain the following lower bound: 
\begin{align*}
& \esp{\Tilde{J}_{t}+\Tilde{J}_{t+1} | f_t=(0,0)} \geq  (1+\epsilon_0 r^2 + 2 \epsilon_0^2 r + \epsilon_0^2) \left(-\frac{1}{2} r^2 - 2 \epsilon_0 r + \epsilon_0^2 r + \frac{1}{2} \epsilon_1^2 + \frac{(1-r)^2}{r} \right) \nonumber\\
& + (1-\epsilon_0 r^2 - 2 \epsilon_0^2 r - \epsilon_0^2) \left( -\frac{1}{2}r + \epsilon_1 r + \frac{1}{2} \epsilon_0 (1-\epsilon_1)r + \frac{1}{2} \epsilon_1^2 r + \frac{1}{2} (1-r)^2 + \frac{1}{2}\epsilon_1^2 + \frac{(1-r)^2}{r} \right).
\end{align*}

Using a computer algebra system, we find the following polynomial expression in $\epsilon_0$ and $\epsilon_1$:
\begin{align*}
& \esp{\Tilde{J}_{t}+\Tilde{J}_{t+1} | f_t=(0,0)} \geq -1 + 2 \frac{(1-r)^2}{r} \\
& + \underbrace{\epsilon_0^6 + \frac{3}{2} \epsilon_0^5 \epsilon_1}_{\geq 0} +\underbrace{\frac{7}{2} \epsilon_0^4}_{(1)} + \underbrace{6 \epsilon_0^3 \epsilon_1^2}_{(2)} + \underbrace{\frac{5}{2} \epsilon_0^2 \epsilon_1^3}_{\geq 0} + \underbrace{4 \epsilon_0^2 \epsilon_1}_{(3)} + \underbrace{\frac{3}{2} \epsilon_0^2}_{(4)} + \underbrace{\frac{1}{2} \epsilon_0 \epsilon_1^5}_{\geq 0} + \underbrace{\epsilon_0 \epsilon_1^3 + \epsilon_0 \epsilon_1^2}_{(5)} + \underbrace{\frac{5}{2} \epsilon_1}_{(6)} \\
& - \left(\underbrace{\frac{7}{2} \epsilon_0^5}_{(1)} + \underbrace{\epsilon_0^4 \epsilon_1^2}_{(2)} + \underbrace{\frac{3}{2}\epsilon_0^4 \epsilon_1}_{(6)} + \underbrace{2\epsilon_0^3 \epsilon_1^3}_{(2)} + \underbrace{4\epsilon_0^3 \epsilon_1}_{(3)} + \underbrace{\frac{3}{2} \epsilon_0^3}_{(4)} + \frac{13}{2} \epsilon_0^2 \epsilon_1^2 + \underbrace{\frac{3}{2} \epsilon_0 \epsilon_1^4}_{(5)} + \underbrace{\frac{1}{2} \epsilon_0 \epsilon_1}_{(6)} \right).
\end{align*}

Now, we regroup terms of the form $c \epsilon_0^{\alpha_0} \epsilon_1^{\alpha_1}$ with terms of the form $-c \epsilon_0^{\beta_0} \epsilon_1^{\beta_1}$, with $\alpha_0 \leq \alpha_1$ and $\beta_0 \leq \beta_1$, in such a way that their sums are
\begin{displaymath}
c \epsilon_0^{\alpha_0} \epsilon_1^{\alpha_1} (1 - \epsilon_0^{\beta_0 - \alpha_0} \epsilon_1^{\beta_1 - \alpha_1}) \geq 0
\end{displaymath}
for any $\epsilon_0, \epsilon_1 \in [0,1]$.

We then obtain
\begin{align*}
\esp{\Tilde{J}_{t}+\Tilde{J}_{t+1} | f_t=(0,0)} \geq -1 + 2 \frac{(1-r)^2}{r} - \frac{13}{2} \epsilon_0^2 \epsilon_1^2+ \underbrace{(6-1-2) \epsilon_0^3 \epsilon_1^2}_{(2)} + \underbrace{\left(1-\frac{1}{2}\right) \epsilon_0 \epsilon_1^2}_{(5)} + \underbrace{\left(\frac{5}{2}-\frac{3}{2}-\frac{1}{2}\right) \epsilon_1}_{(6)}.
\end{align*}
To conclude, remark that 
\begin{align*}
2 \frac{(1-r)^2}{r} & = 2 (1-r)^2 + 2 \frac{(1-r)^3}{r} \geq 2 (\epsilon_0 + \epsilon_1)^2 = 2 \epsilon_0^2 + 4 \epsilon_0 \epsilon_1 + 2 \epsilon_1^2.
\end{align*}
This is enough to compensate the last negative term $\displaystyle -\frac{13}{2} \epsilon_0^2 \epsilon_1^2$.

\subsection{Lower bound of $\esp{\Tilde{J}_{t} + \Tilde{J}_{t+1} | f_t = (1,0,0,0)}$ for $n=3$} \label{sec:lb2n31000}
By Equations~\eqref{eq:driftAux1000}, \eqref{eq:trans1000}, \eqref{eq:driftS1}, \eqref{eq:1step1000} and~\eqref{eq:I0}, we obtain the following lower bound:
\begin{align*}
& \esp{\Tilde{J}_{t} + \Tilde{J}_{t+1}|f_t=(1,0,0,0)}\\
& \geq \esp{\Tilde{J}_{t}|f_t=(1,0,0,0)} + \prob{f_{t+1} \in S_1 |f_t=(1,0,0,0)} \esp{\Tilde{J}_{t+1}|f_{t+1}\in S_1} \nonumber \\
    & + \prob{f_{t+1}=(1,0,0,0) |f_t=(1,0,0,0)} \esp{\Tilde{J}_{t+1}|f_{t+1}=(1,0,0,0)} \nonumber \\
    & + \prob{f_{t+1}=(0,0,0,0) |f_t=(1,0,0,0)} I_0.
\end{align*}

Now, replacing the value in $\epsilon_0$ and $\epsilon_1$ according to~\eqref{eq:trans1000}, \eqref{eq:driftS1}, \eqref{eq:1step1000} and~\eqref{eq:I0}, and with the help of a computer algebra system (see Appendix 7), we obtain
\begin{align*}
& \esp{\Tilde{J}_{t} + \Tilde{J}_{t+1}|f_t=(1,0,0,0)} \geq -2+ 2 \frac{(1-r)^3}{r}\\
& + 3 \epsilon_1 + \underbrace{4 \epsilon_1^2}_{(1)} + \epsilon_1^4 + 2 \epsilon_0 \epsilon_1 + 3 \epsilon_0 \epsilon_1^2 + 5 \epsilon_0 \epsilon_1^3 + \underbrace{14 \epsilon_0 \epsilon_1^5}_{(2)} + \epsilon_0 \epsilon_1^7 + 7 \epsilon_0^2 \epsilon_1 + \underbrace{13 \epsilon_0^2 \epsilon_1^3}_{(3)} \\
& + 10 \epsilon_0^2 \epsilon_1^5 + \epsilon_0^2 \epsilon_1^7 + 11 \epsilon_0^3 \epsilon_1 + \underbrace{26 \epsilon_0^3 \epsilon_1^3}_{(4)} + 2 \epsilon_0^3 \epsilon_1^5 + \underbrace{3 \epsilon_0^4}_{(5)+(6)} + \underbrace{49 \epsilon_0^4 \epsilon_1^2}_{(7)} +18 \epsilon_0^4 \epsilon_1^4\\
& + 12 \epsilon_0^5 \epsilon_1 + 11 \epsilon_0^5 \epsilon_1^3 + 2 \epsilon_0^6 \epsilon_1^2 + \underbrace{3 \epsilon_0^7}_{(6)} + 3 \epsilon_0^8 \epsilon_1 + 2 \epsilon_0^9 \\
& - (\underbrace{3 \epsilon_1^3}_{(1)} + 15 \epsilon_0 \epsilon_1^4 + \underbrace{6 \epsilon_0 \epsilon_1^6}_{(2)} +13 \epsilon_0^2 \epsilon_1^2 + \underbrace{12 \epsilon_0^2 \epsilon_1^4}_{(3)} + \underbrace{5 \epsilon_0^2 \epsilon_1^6}_{(2)} + 25 \epsilon_0^3 \epsilon_1^2 +\underbrace{12 \epsilon_0^3 \epsilon_1^4}_{(4)} + 24 \epsilon_0^4 \epsilon_1\\
& +\underbrace{43 \epsilon_0^4 \epsilon_1^3}_{(7)} + \underbrace{3 \epsilon_0^4 \epsilon_1^5}_{(4)} + \underbrace{\epsilon_0^5}_{(5)} + 19 \epsilon_0^5 \epsilon_1^2 + \underbrace{2 \epsilon_0^5 \epsilon_1^4}_{(5)} + \epsilon_0^6 \epsilon_1 + \epsilon_0^6 \epsilon_1^3 + 4 \epsilon_0^7 \epsilon_1 + \underbrace{5 \epsilon_0^8}_{(6)} )
\end{align*}

As in Appendix~\ref{sec:proofn200}, we regroup terms of the form $c \epsilon_0^{\alpha_0} \epsilon_1^{\alpha_1}$ with terms of the form $-c \epsilon_0^{\beta_0} \epsilon_1^{\beta_1}$, with $\alpha_0 \leq \alpha_1$ and $\beta_0 \leq \beta_1$. The last negative terms are then compensated by terms in 
\begin{displaymath}
 \frac{(1-r)^3}{r} = (\epsilon_0+\epsilon_1)^3 + \dots + (\epsilon_0+\epsilon_1)^8 + \frac{(1-r)^9}{r}.
\end{displaymath}
Thus, $\esp{\Tilde{J}_{t} + \Tilde{J}_{t+1}|f_t=(1,0,0,0)} > -2$.

\subsection{Proof that $\esp{\Tilde{J}_t | f_t \in S_1 } \geq I_0$ for $n=3$} \label{sec:cn3ft0}
Equations~\eqref{eq:driftS1} and~\eqref{eq:I0} give that
\begin{align*}
& \esp{\Tilde{J}_t | f_t \in S_1 } - I_0 \\
& = \epsilon_0+2\epsilon_1 + \epsilon_0^2 + 3 \epsilon_1^2 - 3 \epsilon_1^3 + \epsilon_0 \epsilon_1 - \epsilon_0^2 \epsilon_1 + \epsilon_1^4 + \epsilon_0^3 \epsilon_1 \\
& - \left(-\epsilon_0 + \epsilon_1 - \epsilon_0^2 + \epsilon_1^2 + 3 \epsilon_0 \epsilon_1 + 6 \epsilon_0^3 + 2 \epsilon_0^2 \epsilon_1 - 3 \epsilon_0^4 - 3 \epsilon_0^3 \epsilon_1 \right)\\
& = 2 \epsilon_0 + \epsilon_1 + \underbrace{2 \epsilon_0^2 + 2 \epsilon_1^2} + \epsilon_1^4 + 4 \epsilon_0^3 \epsilon_1 + 3 \epsilon_0^4
- (3 \epsilon_1^3 + \underbrace{2 \epsilon_0 \epsilon_1} + 3 \epsilon_0^2 \epsilon_1 + 6\epsilon_0^3 )
\end{align*}

We use a remarkable identity to bound by below the underbraced terms. For the others negative terms, we use the fact that $0 \leq \epsilon_0, \epsilon_1 \leq 1/2$. Hence, 
\begin{displaymath}
3 \epsilon_1^3 \leq 3 \left( \frac{1}{2} \right)^2 \epsilon_1, \ 3 \epsilon_0^2 \epsilon_1 \leq \frac{3}{4} \epsilon_0, \text{ and } 6 \epsilon_0^3 = 4 \epsilon_0^3 + 2 \epsilon_0^3 \leq \epsilon_0 + \epsilon_0^2.
\end{displaymath}

\begin{align*}
& \esp{\Tilde{J}_t | f_t \in S_1 } - I_0 \\
& \geq 2 \epsilon_0 + \epsilon_1 +  \underbrace{\epsilon_0^2 + \epsilon_1^2 +(\epsilon_0-\epsilon_1)^2} + \epsilon_1^4 + 4 \epsilon_0^3 \epsilon_1 + 3 \epsilon_0^4
- \left(\frac{3}{4} \epsilon_1 + \frac{3}{4} \epsilon_0 + \epsilon_0+\epsilon_0^2  \right)\\
& = \frac{1}{4}\epsilon_0 + \frac{1}{4}\epsilon_1 + \epsilon_1^2 +(\epsilon_0-\epsilon_1)^2 + \epsilon_1^4 + 4 \epsilon_0^3 \epsilon_1 + 3 \epsilon_0^4 >0.
\end{align*}

\subsection{Proof that $\esp{\Tilde{J}_t | f_t \in S_1 } \geq \esp{\Tilde{J}_t | f_t = (1,0,0,0)}$ for $n=3$} 
\label{sec:cn3ft1}
Equations~\eqref{eq:driftS1} and~\eqref{eq:1step1000} give that
\begin{align*}
& \esp{\Tilde{J}_t | f_t \in S_1 } - \esp{\Tilde{J}_t | f_t = (1,0,0,0)}\\
& = \epsilon_0+2\epsilon_1 + \epsilon_0^2 + 3 \epsilon_1^2 - 3 \epsilon_1^3 + \epsilon_0 \epsilon_1 - \epsilon_0^2 \epsilon_1 + \epsilon_1^4 + \epsilon_0^3 \epsilon_1 \\
& - (-\epsilon_0 + \epsilon_1 + \epsilon_0^2 + \epsilon_1^2 + 2 \epsilon_0 \epsilon_1 + 2 \epsilon_0^3 + 2 \epsilon_0 \epsilon_1^2 - \epsilon_0^4 - \epsilon_0 \epsilon_1^3)\\
& = 2 \epsilon_0 + \epsilon_1 + 2 \epsilon_1^2 + \epsilon_1^4 + \epsilon_0^3 \epsilon_1 + \epsilon_0^4 + \epsilon_0 \epsilon_1^3\\
& - (3 \epsilon_1^3 +\epsilon_0\epsilon_1 + \epsilon_0^2 \epsilon_1 + 2 \epsilon_0^3 + 2 \epsilon_0 \epsilon_1^2 ).
\end{align*}

As in Appendix~\ref{sec:cn3ft0}, the fact that $0 \leq \epsilon_0, \epsilon_1 \leq 1/2$ allows us to bound by below the negative terms.
\begin{align*}
& \esp{\Tilde{J}_t | f_t \in S_1 } - \esp{\Tilde{J}_t | f_t = (1,0,0,0)}\\
& \geq 2 \epsilon_0 + \epsilon_1 + 2 \epsilon_1^2 + \epsilon_1^4 + \epsilon_0^3 \epsilon_1 + \epsilon_0^4 + \epsilon_0 \epsilon_1^3 - \left(\frac{3}{4} \epsilon_1 +\frac{1}{2}\epsilon_0 + \frac{1}{4}\epsilon_1 + \frac{1}{2} \epsilon_0 + \frac{1}{2} \epsilon_0 \right)\\
& =  \frac{1}{2} \epsilon_0 + 2 \epsilon_1^2 + \epsilon_1^4 + \epsilon_0^3 \epsilon_1 + \epsilon_0^4 + \epsilon_0 \epsilon_1^3 > 0.
\end{align*}

\subsection{Lower bounds of $\esp{\Tilde{J}_{t} + \Tilde{J}_{t+1} | f_t = (0,0,0,0)}$ for $n=3$} \label{sec:lb2n30000}
\subsubsection{When the minimum is $I_0$:}
The lower bound of Equation~\eqref{eq:lb0000} becomes
\begin{align*}
D_0 & = I_0 \nonumber + \prob{f_{t+1}=(0,0,0,0) |f_t=(0,0,0,0)} I_0 \nonumber \\
& + \prob{f_{t+1} \in S_1 | f_t = (0,0,0,0)} \esp{\Tilde{J}_{t+1}| f_{t+1} \in S_1} \nonumber \\
& + \prob{f_{t+1} \in \{(1,0,0,0),(1,*,0,0)\} |f_t=(0,0,0,0)} \esp{\Tilde{J}_{t+1}|f_{t+1}=(1,0,0,0)} \nonumber\\
& + \prob{f_{t+1}=(*,0,0,0) \cup E^* |f_t=(0,0,0,0)} I_0
\end{align*}

Now, replace the terms of this equation by their values given in Equations~\eqref{eq:I0},~\eqref{eq:driftS1},~\eqref{eq:1step1000} and deduced from Equation~\eqref{eq:trans0000}. With the help of a computer algebra system (see Appendix 7), we obtain
\begin{align*}
& D_0 = -2 + 2 \frac{(1-r)^3}{r}\\
& + 3 \epsilon_1 + 4 \epsilon_1^2 +\epsilon_1^4 + 4 \epsilon_0 \epsilon_1 + 6 \epsilon_0^2 \epsilon_1^3 + 5 \epsilon_0^2 \epsilon_1^5  + 4 \epsilon_0^3 + 2 \epsilon_0^3 \epsilon_1 + 14 \epsilon_0^3 \epsilon_1^3 + 5 \epsilon_0^3 \epsilon_1^5 +15 \epsilon_0^4 \epsilon_1 \\
& + \epsilon_0^4 \epsilon_1^4  + 10 \epsilon_0^5 + 15 \epsilon_0^5 \epsilon_1^2 + 2 \epsilon_0^5 \epsilon_1^4 + 6 \epsilon_0^6 \epsilon_1 + 6 \epsilon_0^7 + 3 \epsilon_0^7 \epsilon_1^2 + 8 \epsilon_0^8 \epsilon_1 + 4 \epsilon_0^9\\
& - (3\epsilon_1^3 + 9 \epsilon_0^2 \epsilon_1^4 + \epsilon_0^2 \epsilon_1^6 + 10 \epsilon_0^3 \epsilon_1^2 + 11 \epsilon_0^3 \epsilon_1^4 + \epsilon_0^3 \epsilon_1^6 + 9 \epsilon_0^4  + 7 \epsilon_0^4 \epsilon_1^2 + \epsilon_0^4 \epsilon_1^3 + 18 \epsilon_0^5 \epsilon_1 \\
& + 8 \epsilon_0^5 \epsilon_1^3 + 3 \epsilon_0^6 + 4 \epsilon_0^6 \epsilon_1^2 + 11 \epsilon_0^7 \epsilon_1  + 10 \epsilon_0^8) 
\end{align*}

We conclude that it is greater than $-2$ as in Appendices~\ref{sec:proofn200} and~\ref{sec:lb2n31000}, by grouping terms and developing $(1-r)^3/r$ as much as necessary. 

This technique does not hold for the term in $\epsilon_0^4$. Indeed, we obtain $6 \epsilon_0^3 - 7 \epsilon_0^4$ after that. To prove its positivity, we use the fact that $\epsilon_0 \leq 1/2$, and so $6 \epsilon_0^3 - 7 \epsilon_0^4 \geq (6-7/2) \epsilon_0^3 > 0$.

\subsubsection{When the minimum is $\esp{\Tilde{J}_{t+1}|f_{t+1}=(1,0,0,0)}$:}
The lower bound of Equation~\eqref{eq:lb0000} becomes
\begin{align*}
D_1 & = I_0 + \prob{f_{t+1}=(0,0,0,0) |f_t=(0,0,0,0)} I_0 \nonumber \\
& + \prob{f_{t+1} \in S_1 | f_t = (0,0,0,0)} \esp{\Tilde{J}_{t+1}| f_{t+1} \in S_1} \nonumber \\
& + \prob{f_{t+1} \in \{(1,0,0,0),(1,*,0,0)\} |f_t=(0,0,0,0)} \esp{\Tilde{J}_{t+1}|f_{t+1}=(1,0,0,0)} \nonumber\\
& + \prob{f_{t+1}=(*,0,0,0) \cup E^* |f_t=(0,0,0,0)} \esp{\Tilde{J}_{t+1}|f_{t+1}=(1,0,0,0)}
\end{align*}

Now, replace the terms of this equation by their values given in Equations~\eqref{eq:I0},~\eqref{eq:driftS1},~\eqref{eq:1step1000} and deduced from Equation~\eqref{eq:trans0000}. With the help of a computer algebra system (see Appendix 7), we obtain
\begin{align*}
& D_1 = -2 + 2 \frac{(1-r)^3}{r}\\
& + 3 \epsilon_1 + 4 \epsilon_1^2 + \epsilon_1^4 + 4 \epsilon_0 \epsilon_1 + 6 \epsilon_0^2 \epsilon_1^3  + 5 \epsilon_0^2 \epsilon_1^5 + 4 \epsilon_0^3 + \epsilon_0^3 \epsilon_1 + 4 \epsilon_0^3 \epsilon_1^3 + 7 \epsilon_0^4 \epsilon_1 + 5 \epsilon_0^4 \epsilon_1^2 \\
& + 3 \epsilon_0^4 \epsilon_1^4 + 6 \epsilon_0^5 + 14 \epsilon_0^5 \epsilon_1^3 + 10 \epsilon_0^6 \epsilon_1 + 22 \epsilon_0^7 + 12 \epsilon_0^7 \epsilon_1^2 +20 \epsilon_0^8 \epsilon_1 + 8 \epsilon_0^9 \\
& - (3 \epsilon_1^3 + 9 \epsilon_0^2 \epsilon_1^4 + \epsilon_0^2 \epsilon_1^6  + 5 \epsilon_0^3 \epsilon_1^2 + \epsilon_0^3 \epsilon_1^4 + 7 \epsilon_0^4 + 9 \epsilon_0^4 \epsilon_1^3 \\
& + 15 \epsilon_0^5 \epsilon_1^2 + 4 \epsilon_0^5 \epsilon_1^4 + 7 \epsilon_0^6 + 4 \epsilon_0^6 \epsilon_1^3 + 36 \epsilon_0^7 \epsilon_1 + 24 \epsilon_0^8).
\end{align*}

We conclude that it is greater than $-2$ as in Appendices~\ref{sec:proofn200} and~\ref{sec:lb2n31000}, by grouping terms and developing $(1-r)^3/r$ as much as necessary.

\section{Code}

In order to obtain the lower bounds for the drift, we have implemented the calculations both in Python and in Mathematica. We present our codes in the next two subsections.
\subsection{Python code}

Below is the Python code we used to perform the drift calculations. 

\begin{lstlisting}[language=Python]
from sympy import *
import numpy as np

x, y = symbols('\u03B5_0 \u03B5_1')    ## Epsilon symbols

init_printing(use_unicode=True)

r=1-x-y

#For n = 3

#%%Drift on 1 step for the boundary S1 (eq. 12)

DS1 = np.sum([(-1+k) * (1-y)**(3-k) * x**k * r for k in [0,1,2]])
DS1 += (1 + 1/r) * x**3
for l in [0,1,2]:
    DS1 += np.sum([(-l+k) * y * (1-y)**(l-k) * x**k * r for k in range(0,l+1)])
    if l != 2:
        DS1 += np.sum([(-l+k) * (x+y)**(k-l-1) * y * x**l * r for k in range(l+1,3)])
    DS1 += ((2-l) + 1/r) * (x+y)**(2-l) * x**l * y 

print(simplify(DS1 - (1-r)**3/r))

#%%Drift on 1 step for the boundary 1000 (eq. 15)

D10 = np.sum([(k-2) * (1-y)**(3-k) * x**(1+k) * r for k in [0,1,2,3]])
D10 += np.sum([(k-2) * y * (1-y)**(2-k) * x**(1+k) * r for k in [0,1,2]])
D10 += (1-x) * x**3 * r
for l in [0,1,2]:
    if l!=0:
        D10 += np.sum([(k-l-1) * x**(k+1) * y * (1-y)**(l-k-1) * r for k in range(0,l)])
    D10 += (-1) * (1-x) * x**l * r
    D10 += np.sum([(k-l-1) * (x+y)**(k-l-1) * y * x**l * r for k in range(l+1,4)])
    D10 += (2-l+1/r) * ((x+y)**(3-l) * y * x**l)
D10 += (1+1/r) * x**4
D10 += (1+1/r) * y * x**3

print(simplify(D10-(x+y)**3/r))

#%%Drift on 1 step for the boundary 0000 (eq. 17)

D00 = -3 * y * x**2 * r**2
D00 += -3 * (1-y) * x**2 * r**2
D00 += np.sum([(k-2) * x**3 * r**2 for k in [0,1]])
D00 += -3 * x**2 * y * r
D00 += -2 * (1-x) * x**2 * r
D00 += np.sum([(k-2) * x**4 * r for k in [0,1,2,3]])
D00 += np.sum([(k-2) * y * x**3 * r for k in [0,1]])
D00 += np.sum([(k-2) * (1-x) * x**3 * r for k in [2,3]])
D00 += -(1-x) * r
D00 += 0
D00 += (-2) * (1-x) * x * r
D00 += (-1) * (1-x) * x * r
D00 += np.sum([(k+1) * (x+y)**(k+1) * y * r for k in [0,1]])
D00 += np.sum([(k) * (x+y)**k * y * x * r for k in [0,1]])
D00 += (-1) * (1-x) * x**2 * r
D00 += 0
D00 += (1+1/r) * x**4
D00 += (1+1/r) * x**3 * y
for l in [0,1,2]:
    D00 += (2-l+1/r) * (x+y)**(3-l) * y * x**l
    
print(simplify(D00-(x+y)**3/r))

#%%Transition probabilities of type P(y_{t+1} in E | y_t in F) 

#y_t = 1000 (eq. 13)

#y_t+1 = S1 
PS1_10 = 0 
for l in [0,1,2]:
    if l!=0:
        PS1_10 += np.sum([x**(k+1) * y * (1-y)**(l-k-1) * r for k in range(0,l)])
    PS1_10 += (1-x) * x**l * r
    PS1_10 += np.sum([(x+y)**(k-l-1) * y * x**l * r for k in range(l+1,4)])
    PS1_10 += (x+y)**(3-l) * y * x**l

#y_t+1 = 1000
P10_10 = np.sum([y * (1-y)**(2-k) * x**(1+k) * r for k in [0,1,2]])
P10_10 += (1-x) * x**3 * r
P10_10 += y * x**3

#y_t+1 = 0000
P00_10 = np.sum([(1-y)**(3-k) * x**(1+k) * r for k in [0,1,2,3]])
P00_10 += x**4

#Verify if the sum is equal to 1
print(simplify(PS1_10+P10_10+P00_10))

#%%
#y_t = 0000 (eq. 16)

#y_t+1 = S1 
PS1_00 = x**2 * y * r
PS1_00 += (1-x) * x**2 * r
PS1_00 += (1-x) * r
PS1_00 += y * r
PS1_00 += (1-x) * x * r
PS1_00 += (1-x) * x * r
PS1_00 += np.sum([(x+y)**(k+1) * y * r for k in [0,1]])
PS1_00 += np.sum([(x+y)**k * y * x * r for k in [0,1]])
PS1_00 += (1-x) * x**2 * r
PS1_00 += y * x**2 * r
for l in [0,1,2]:
    PS1_00 += (x+y)**(3-l) * y * x**l

#y_t+1 = 1000
P10_00 = np.sum([y * x**3 * r for k in [0,1]])
P10_00 += np.sum([(1-x) * x**3 * r for k in [2,3]])
P10_00 += x**3 * y

#y_t+1 = 0000
P00_00 = np.sum([x**4 * r for k in [0,1,2,3]])
P00_00 += x**4

#y_t+1 = *000
PE0_00 = np.sum([x**3 * r**2 for k in [0,1]])

#y_t+1 = E*
PEE_00 = (1-y) * x**2 * r**2

#y_t+1 = 1*00
P1E_00 = y * x**2 * r**2

#Verify if the sum is equal to 1
print(simplify(PS1_00+P10_00+P00_00+PE0_00+PEE_00+P1E_00))

#%%Drift on 2 steps for the boundary 1000 (sec. 6.2)

D2T_10 = D10+PS1_10 * DS1+P10_10 * D10+P00_10 * D00
print(simplify(D2T_10-2 * (1-r)**3/r))

#%%Drift on 2 steps for the boundary 0000 (sec. 6.5)

#%%Case where we major the term depending on E^* by I0 (sec. 6.5.1)
D2T_00_0 = D00+P00_00 * D00+PS1_00 * DS1+(P10_00+P1E_00) * D10+(PE0_00+PEE_00) * D00
print(simplify(D2T_00_0-2*(1-r)**3/r))

#%%Verification of the majoration by -2 by modifying 2*(1-r)**3/r
bino_0 = np.sum([(x+y)**k for k in range(3,10)])
print(simplify(D2T_00_0-2*(1-r)**3/r+bino_0))

#%%Case where we major the term depending on E^* by E(J_{t+1}|f_{t+1}=(1000)) (sec. 6.5.2)
D2T_00_1 = D00+P00_00 * D00+PS1_00 * DS1+(P10_00+P1E_00) * D10+(PE0_00+PEE_00) * D10
print(simplify(D2T_00_1 - 2*(1-r)**3/r))

#%%Verification of the majoration by -2 by modifying 2*(1-r)**3/r
bino_1 = np.sum([(x+y)**k for k in range(3,10)])
print(simplify(D2T_00_1-2 * (1-r)**3/r+bino_1))
\end{lstlisting}

\subsection{Mathematica code}

\lstset{language=Mathematica}
\lstset{basicstyle={\sffamily\footnotesize},
  numbers=none,
  breaklines=true,
  captionpos={t},
  frame={lines},
  rulecolor=\color{black},
  framerule=0.5pt,
  columns=flexible,
  tabsize=2
}

Below is the Mathematica code we used to perform the drift calculations.\\

\lstset{language= Mathematica}
\lstset{basicstyle= {\sffamily\footnotesize}, 
  numbers= none, 
  breaklines= true, 
  captionpos= {t}, 
  frame= {lines}, 
  rulecolor= \color{black}, 
  framerule= 0.5pt, 
  columns= flexible, 
  tabsize= 2
}

Input: 
\begin{lstlisting}[language= Mathematica]
	 r := 1-x-y;
\end{lstlisting}
\subsection*{Section 4.2}

\subsection*{Drift on one step for the boundary $S_1$  (eq. \ref{eq:driftS1})}

Input: 
\begin{lstlisting}[language= Mathematica]
	A1 := Sum[(-1+k)*(1-y)^(3-k)*x^k*r, {k, 0, 2}]
	A2 := Sum[(2+k)*(1-r)^k*x^3*r, {k, 0, Infinity}]
	A3 := Sum[(-l+k)*y*(1-y)^(l-k)*x^k*(1-x-y), {l, 0, 2}, {k, 0, l}]
	A4 := Sum[(-l+k)*(1-r)^(k-l-1)*y*x^l*r, {l, 0, 1}, {k, l+1, 2}]
	A5 := Sum[(3-l+k)*(1-r)^(k+2-l)*y*x^l*r, {l, 0, 2}, {k, 0, Infinity}]
	T12b := Simplify[A1+A2+A3+A4+A5-(1-r)^3/r]
	T12b
\end{lstlisting}
Output:
\begin{lstlisting}[language= Mathematica]
	-1+x+x^2+2y+xy-x^2 x^1+x^3y+3y^2-3y^3+y^4
\end{lstlisting}

\subsection*{Drift on one step for the boundary $(1, 0, 0, 0)$  (eq. \ref{eq:1step1000})}

Input: 
\begin{lstlisting}[language= Mathematica]
	B1 := Sum[(k-2)*(1-y)^(3-k)*x^(1+k)*r, {k, 0, 3}] 
	B2 := Sum[(k-2)*y*(1-y)^(2-k)*x^(1+k)*r, {k, 0, 2}] 
	B3 := (1-x)*x^3*r 
	B4 := Sum[(-1+k-l)*x^(k+1)*y*(1-y)^(l-k-1)*r, {l, 1, 2}, {k, 0, l-1}] 
	B5 := Sum[-(1-x)*x^l*r, {l, 0, 2}] 
	B6 := Sum[(-1+k-l)*(1-r)^(k-l-1)*y*x^l*r, {l, 0, 2}, {k, l+1, 3}] 
	B7 := Sum[(2+k)*(1-r)^k*x^4*r, {k, 0, Infinity}]
	B8 := Sum[(2+k)*(1-r)^k*y*x^3*r, {k, 0, Infinity}] 
	B9 := Sum[(3+k-l)*(1-r)^(3+k-l)*y*x^l*r, {l, 0, 2}, {k, 0, Infinity}]
	T15b := Expand[Simplify[B1+B2+B3+B4+B5+B6+B7+B8+B9-(1-r)^3/r]] 
	T15b
\end{lstlisting}
Output:
\begin{lstlisting}[language= Mathematica]
	-1-x+x^2+2x^3-x^4+y+2xy+y^2+2xy^2-xy^3
\end{lstlisting}

\subsection*{Drift on one step for the boundary $(0, 0, 0, 0)$ (eq. \ref{eq:I0})}

Input: 
\begin{lstlisting}[language= Mathematica]
	C1 := -3*y*r*x^2*r
	C2 := -3*(1-y)*r*x^2*r
	C3 := Sum[(-2+k)*r*x^3*r, {k, 0, 1}]
	C4 := -3*y*x^2*r
	C5 := -2*(1-x)*x^2*r
	C6 := Sum[(-2+k)*x^4*r, {k, 0, 3}]
	C7 := Sum[(-2+k)y*x^3*r, {k, 0, 1}]
	C8 := Sum[(-2+k)*(1-x)*x^3*r, {k, 2, 3}]
	C9 := -1*(1-x)*r
	C10 := -2*(1-x)*x*r
	C11 := -(1-x)*x*r
	C12 := Sum[(1+k)*(1-r)^(k+1)*y*r, {k, 0, 1}]
	C13 := Sum[k*(1-r)^k*y*x*r, {k, 0, 1}]
	C14 := -(1-x)*x^2*r
	C15 := Sum[(2+k)*(1-r)^k*x^4*r, {k, 0, Infinity}]
	C16 := Sum[(2+k)*(1-r)^k*y*x^3*r, {k, 0, Infinity}]
	C17 := Sum[(3-l+k)*(1-r)^(k+3-l)*y*x^l*r, {l, 0, 2}, {k, 0, Infinity}]
	T17b := Expand[Simplify[
		C1+C2+C3+C4+C5+C6+C7+C8+C9+C10+C11+C12+C13+C14+C15+C16+C17-(1-r)^3/r]]
	T17b
\end{lstlisting}
Output:
\begin{lstlisting}[language= Mathematica]
	-1-x-x^2+6x^3-3x^4+y+3xy+2x^2y-3x^3y+y^2
\end{lstlisting}

\subsection*{Section 6.2}

\subsubsection*{Calculation of $\mathbb{P}(f_{t+1} \in S_1 | f_t = (1, 0, 0, 0))$}

Input:
\begin{lstlisting}[language= Mathematica]
	P11 := Sum[x^(k+1)*y*(1-y)^(l-k-1)*r, {l, 1, 2}, {k, 0, l-1}]
	P12 := Sum[(1-x)*x^l*r, {l, 0, 2}] 
	P13 := Sum[(1-(1-x-y))^(k-l-1)*y*x^l*(1-x-y), {l, 0, 2}, {k, l+1, 3}]
	P14 := Sum[(1-(1-x-y))^(3+k-l)*y*x^l*(1-x-y), {l, 0, 2}, {k, 0, Infinity}] 
	P1 := P11+P12+P13+P14
\end{lstlisting}

\subsubsection*{Calculation of $\mathbb{P}(f_{t+1} \in (1, 0, 0, 0) | f_t = (1, 0, 0, 0))$}

Input:
\begin{lstlisting}[language= Mathematica]
	P21 := Sum[y*(1-y)^(2-k)*x^(1+k)*r, {k, 0, 2}]
	P22 := Sum[(1-(1-x-y))^k*y*x^3*(1-x-y), {k, 0, Infinity}] 
	P23 := (1-x)*x^3*(1-x-y)
	P2 := P21+P22+P23
\end{lstlisting}

\subsubsection*{Calculation of $\mathbb{P}(f_{t+1} \in (0, 0, 0, 0) | f_t = (1, 0, 0, 0))$}

Input:
\begin{lstlisting}[language= Mathematica]
	P31 := Sum[(1-y)^(3-k)*x^(1+k)*(1-x-y), {k, 0, 3}]
	P32 := Sum[(1-(1-x-y))^k*x^4*(1-x-y), {k, 0, Infinity}] 
	P3 := P31+P32
\end{lstlisting}

\subsubsection*{Check and verify}

Input:
\begin{lstlisting}[language= Mathematica]
	Simplify[P1+P2+P3]
\end{lstlisting}

Output:
\begin{lstlisting}[language= Mathematica]
	1	
\end{lstlisting}

Input:
\begin{lstlisting}[language= Mathematica]
	Expand[Simplify[T15+T12*P1+T15*P2+T17*P3-2*(1-r)^3/r]]
\end{lstlisting}

Output:
\begin{lstlisting}[language= Mathematica]
-2+3x^4-x^5+3x^7-5x^8+2x^9+3y+2xy+7x^2y+11x^3y-24x^4y+12x^5y-x^6y-4x^7y
+3x^8y+4y^2+3xy^2-13x^2y^2-25x^3y^2+49x^4y^2-19x^5y^2+2x^6y^2-3y^3+5xy^3
+13x^2y^3+26x^3y^3-43x^4y^3+11x^5y^3-x^6y^3+y^4-15xy^4-12x^2y^4-12x^3y^4
+18x^4y^4-2x^5y^4+14xy^5+10x^2y^5+2x^3y^5-3x^4y^5-6xy^6-5x^2y^6+xy^7+x2y^7
\end{lstlisting}

\subsection*{Section 6.5}

\subsubsection*{Calculation of $\mathbb{P}(f_{t+1} \in S_1 | f_t = (0, 0, 0, 0))$}

Input:
\begin{lstlisting}[language= Mathematica]
	Q11 := y*x^2*r+(1-x)*x^2*r+(1-x)*r+
		y*r+(1-x)*x*r+(1-x)*x*r+(1-x)*x^2*r+y*x^2*r
	Q12 := Sum[(1-r)^(k+1)*y*r, {k, 0, 1}]
	Q13 := Sum[(1-r)^k*y*x*r, {k, 0, 1}]
	Q14 := Sum[(1-r)^(3+k-l)*y*x^l*r, {l, 0, 2}, {k, 0, Infinity}]
	Q1 := Q11+Q12+Q13+Q14
\end{lstlisting}

\subsubsection*{Calculation of $\mathbb{P}(f_{t+1} \in \{(1, 0, 0, 0), (1, *, 0, 0)\} | f_t = (0, 0, 0, 0))$}

Input:
\begin{lstlisting}[language= Mathematica]
	Q21 := 2*y*x^3*r+2*(1-x)*x^3*r
	Q22 := Sum[(1-r)^k*y*x^3*r, {k, 0, Infinity}]
	Q23 := y*r*x^2*r
	Q2 := Q21+Q22+Q23
\end{lstlisting}

\subsubsection*{Calculation of $\mathbb{P}(f_{t+1} \in (0, 0, 0, 0) | f_t = (0, 0, 0, 0))$}

Input:
\begin{lstlisting}[language= Mathematica]
	Q31 := 4*x^4*r 
	Q32 := Sum[(1-r)^k*x^4*r, {k, 0, Infinity}]
	Q3 := Q31+Q32
\end{lstlisting}

\subsubsection*{Calculation of $\mathbb{P}(f_{t+1} \in (*, 0, 0, 0)\cup E^*| f_t = (0, 0, 0, 0))$}

Input:
\begin{lstlisting}[language= Mathematica]
	Q4 := (1-y)*r*x^2*r+2*r*x^3*r
\end{lstlisting}

\subsubsection*{Check and verify}

Input:
\begin{lstlisting}[language= Mathematica]
	Simplify[Q1+Q2+Q3+Q4]
\end{lstlisting}

Output:
\begin{lstlisting}[language= Mathematica]
	1	
\end{lstlisting}

\subsubsection*{When the min is $I_0$}

Input:
\begin{lstlisting}[language= Mathematica]
	Expand[Simplify[T17+T12*Q1+T15*Q2+T17*Q3+T17*Q4-2*(1-r)^3/r]]
\end{lstlisting}

Output:
\begin{lstlisting}[language= Mathematica]
	-2+4x^3-9x^4+10x^5-3x^6+6x^7-10x^8+4x^9+3y+4xy+2x^3y+15x^4y-18x^5y+6x^6y
	-11x^7y+8x^8y+4y^2-10x^3y^2-7x^4y^2+15x^5y^2-4x^6y^2+3x^7y^2-3y^3+6x^2y^3
	+14x^3y^3-x^4y^3-8x^5y^3+y^4-9x^2y^4-11x^3y^4+x^4y^4+2x^5y^4+5x^2y^5
	+5x^3y^5-x^2y^6-x^3y^6
\end{lstlisting}

\subsubsection*{When the min is $\esp{J_{t} | f_t = (1, 0, 0, 0)}$}

Input:
\begin{lstlisting}[language= Mathematica]
	Expand[Simplify[T17+T12*Q1+T15*Q2+T17*Q3+T15*Q4-2*(1-r)^3/r]]
\end{lstlisting}

Output:
\begin{lstlisting}[language= Mathematica]
	-2+4x^3-7x^4+6x^5-7x^6+22x^7-24x^8+8x^9+3y+4xy+x^3y+7x^4y+10x^6y-36x^7y
	+20x^8y+4y^2-5x^3y^2+5x^4y^2-15x^5y^2+12x^7y^2-3y^3+6x^2y^3+4x^3y^3
	-9x^4y^3+14x^5y^3-4x^6y^3+y^4-9x^2y^4-x^3y^4+3x^4y^4-4x^5y^4+5x^2y^5
	-x^2y^6
\end{lstlisting}

\end{document}